\def\d{\mathrm{d}}
 \providecommand{\Xint}[1]{\mathchoice
    {\XXint\displaystyle\textstyle{#1}}%
    {\XXint\textstyle\scriptstyle{#1}}%
    {\XXint\scriptstyle\scriptscriptstyle{#1}}%
    {\XXint\scriptscriptstyle\scriptscriptstyle{#1}}%
    \!\int}
  \providecommand{\XXint}[3]{{\setbox0=\hbox{$#1{#2#3}{\int}$}
      \vcenter{\hbox{$#2#3$}}\kern-.5\wd0}}
  \providecommand{\dashint}{\mathop{\Xint-}}
\def\ristretto{\lfloor}
\newtheorem{theorem}{Theorem}[section]
\newtheorem{lemma}[theorem]{Lemma}
\newtheorem{proposition}[theorem]{Proposition}
\theoremstyle{definition}
\newtheorem{definition}[theorem]{Definition}
\newtheorem{remark}[theorem]{Remark}
\numberwithin{equation}{section}
\newcommand{\R}{\mathbb{R}}
\newcommand{\N}{\mathbb{N}}
\newcommand{\ssubset}{\subset\joinrel\subset}
\newcommand{\Rd}{{\R}^d}
\newcommand{\Sd}{{\mathbb{S}}^{d-1}}
\newcommand{\Ld}{{\mathcal{L}}^d}
\def\Xint#1{\mathchoice 
  {\XXint\displaystyle\textstyle{#1}}%
  {\XXint\textstyle\scriptstyle{#1}}%
  {\XXint\scriptstyle\scriptscriptstyle{#1}}%
  {\XXint\scriptscriptstyle\scriptscriptstyle{#1}}%
  \!\int} 
\def\XXint#1#2#3{{\setbox0=\hbox{$#1{#2#3}{\int}$} 
  \vcenter{\hbox{$#2#3$}}\kern-.5\wd0}} 
\def\-int{\Xint -}
\numberwithin{equation}{section}
\begin{document}

\title[Free-discontinuity problems  with $p(\cdot)$-growth]{Regularity of minimizers for  free-discontinuity problems  with $p(\cdot)$-growth}

\author{Chiara Leone}
\address[Chiara Leone]{Department of Mathematics and Applications ``R. Caccioppoli'', University of Naples Federico II, Via Cintia, Monte S. Angelo, 80126 Naples, Italy}
\email{chiara.leone@unina.it}
 
\author{Giovanni Scilla}
\address[Giovanni Scilla]{Department of Mathematics and Applications ``R. Caccioppoli'', University of Naples Federico II, Via Cintia, Monte S. Angelo, 80126 Naples, Italy}
\email[Giovanni Scilla]{giovanni.scilla@unina.it}

\author{Francesco Solombrino}
\address[Francesco Solombrino]{Department of Mathematics and Applications ``R. Caccioppoli'', University of Naples Federico II, Via Cintia, Monte S. Angelo, 80126 Naples, Italy}
\email{francesco.solombrino@unina.it}

\author{Anna Verde}
\address[Anna Verde]{Department of Mathematics and Applications ``R. Caccioppoli'', University of Naples Federico II, Via Cintia, Monte S. Angelo, 80126 Naples, Italy}
\email{anna.verde@unina.it}

\subjclass[2020]{49J45, 46E30, 35B65}

\keywords{free-discontinuity problems, $p(x)$-growth, regularity, minimizers}

\begin{abstract}

A regularity result for free-discontinuity energies defined on the space $SBV^{p(\cdot)}$ of special functions of bounded variation with variable exponent is proved, under the assumption of a log-H\"older continuity for the variable exponent $p(x)$. Our analysis expand on the regularity theory for minimizers of a class of free-discontinuity problems in the nonstandard growth case. This may be seen as a follow-up of the paper \cite{FMT}, dealing with a constant exponent. 
\end{abstract}

\maketitle

\tableofcontents

\section{Introduction}
Integral functionals with non-standard growth, introduced  by Zhikov~\cite{zikov, zikov3}, are customary in the modeling of composite materials which exhibit a strongly anisotropic behavior. In such a setting, a point-dependent integrability of the deformation gradient is usually assumed, which may be captured, for instance, in terms of variable exponents spaces (see ~\cite{KR, SH}). Over the years, the regularity properties of minimizers (in the Sobolev space $W^{1, p(\cdot)}(\Omega; \R^m)$, where $\Omega$ is a reference configuration) of variational integrals of the form
\begin{equation}\label{introeq:bulk}
\int_\Omega f(x,\nabla u(x))\,\mathrm{d}x\,,
\end{equation}
 under  a $p(x)$-growth condition
\begin{equation}\label{introeq:growth}
c|\xi|^{p(x)} \le f(x, \xi)\le C(1+|\xi|^{p(x)})\,,
\end{equation}
has been the subject of many contributions. Among them, we may mention ~\cite{AM, AM2, CosM, MR3209686,  ELEUTERI2023103815, HastoOk, Z2}. The common key assumptions to these papers are superlinearity of $p(\cdot)$ (meaning $\min_{\Omega} p(\cdot) >1$), and the (possibly strong) log-H\"older continuity of the exponent. This condition on the modulus of continuity of the variable exponent was firstly considered by {Zhikov} in \cite{Z2} to prevent the Lavrentiev phenomenon. Roughly speaking,  it allows one to freeze the exponent on small balls around a point, as pointed out in \cite[Lemma~3.2]{Dieni} and is particularly suitable for blow-up methods. Besides regularity issues, we may point out for instance its usage in \cite{ABF}, in order to show that the singular part of the measure representation of relaxed functionals with growth \eqref{introeq:growth} disappears. 

The focus of our paper is, instead, on the regularity of minimizers of {\it free-discontinuity functionals} in variable exponent spaces. In such a setting, which is rather natural to describe failure phenomena such as fracture and damage, singularities may appear in the form of jump discontinuities.  These problems are characterized  by the competition between a ``bulk'' energy of the form \eqref{introeq:bulk} and a ``surface" energy accounting for the energy spent to produce a crack (\cite{Griffith:1921, FrancfortMarigo:1998}). A prototypical functional is then taking the form
\begin{equation}\label{introeq:generalprob}
\int_\Omega f(x,\nabla u(x))\,\mathrm{d}x+\int_{J_u} g\big(x,  u^+(x), u^-(x), \nu_u(x)\big) \, {\rm d} \mathcal{H}^{d-1} (x)\,.
\end{equation}
Above, $J_u$ is the set of jump discontinuities of $u$ with normal $\nu_u$, which, exactly like the gradient $\nabla u$ and the one-sided traces $u^+(x)$, $u^-(x)$, have in general to be understood in an approximate measure-theoretical sense  (see Section \ref{s:bv}). The $p(\cdot)$-growth condition \eqref{introeq:growth} is assumed on $f$, while $g$ is bounded from above and from below by positive constants. 

The above problem is usually complemented with lower order fidelity terms or boundary data, which, whenever $p(\cdot)$ is superlinear,  allow one to apply the results of \cite{A89, Amb} (see also \cite{Fr} for the case of boundary data) and obtain sequential coercivity of the functional in the space of Special functions of Bounded Variation ($SBV$), see \cite{Ambrosio-Fusco-Pallara:2000}. Under the $BV$-ellipticity of $g$, which provides lower semicontinuity of the surface integral, the well-posedness of the minimum problem \eqref{introeq:generalprob}  in the subspace $SBV^{p(\cdot)}$ of $SBV$ functions with $p(\cdot)$-integrable gradients can be then inferred from the results of \cite{DeG}, whenever $f$ is convex in the gradient variable, or more in general of \cite{DCLV}, whenever $f$ is quasiconvex and and the exponent is log-H\"older continuous. We also refer the reader to the recent \cite{ars}, where mere continuity of $p$ is shown to be sufficient for lower semicontinuity of the bulk energy. We however warn the reader that  log-H\"older continuity is again going to play a central role when coming to regularity issues.
It is also worth mentioning that, besides Materials Science, applications of a variable exponent in the setting of functions of bounded variation already appeared in image reconstruction~\cite{CLR, Hasto, Hasto2, Li}. This is the setting where free-discontinuity problems were originally introduced \cite{MS}.

Now, for $g\equiv c$, with $c>0$, \eqref{introeq:generalprob} can be seen as a weak formulation of the problem
\begin{equation}\label{introeq:strongprob}
\int_{\Omega\setminus K} f(x,\nabla u(x))\,\mathrm{d}x+c\mathcal{H}^{d-1} (K)
\end{equation}
for a  {\it closed} set $K$, not prescribed a-priori, and a deformation $u$ which is smooth outside of $K$. One is then willing to show that, given a minimizer $u$ of \eqref{introeq:generalprob}, the pair $(u, \overline{J}_u)$ indeed provides a minimizer of \eqref{introeq:strongprob}. This is clearly the case whenever $\mathcal{H}^{d-1}(\overline{J}_u\setminus J_u)=0$; notice that, once this is achieved, setting $K=\overline{J}_u$ the smoothness of $u$ in $\Omega \setminus K$ can be then deduced from the regularity theory for variational integrals in Sobolev spaces. For the model case 
$$f(x,\nabla u(x))=|\nabla u(x)|^p,$$
with $p$ a constant exponent, this difficult task was first accomplished in the seminal paper \cite{DeGCarLea}. There, it was namely shown that  $\mathcal{H}^{d-1}(\overline{S}_u\setminus S_u)=0$, where $S_u$ is the singular set of $u$, a superset of $J_u$ which, in a $BV$ setting, differs therefrom only by a $\mathcal{H}^{d-1}$-null set. A crucial estimate in order to get their result was the so-called density lower bound 
\[
\mathcal{H}^{d-1}(S_u\cap B_\rho(x_0)) \ge \theta_0 \rho^{d-1}
\]
for $x_0\in S_u$ and sufficiently small balls $B_\rho(x_0)$, with $\theta_0$ independent of $x_0$ and $\rho$. This requires a fine decay analysis for the energy on small balls, which is partially simplified by the homogeneity of the bulk energy, ensuring that minimality is invariant under suitable rescaling.

If one renounces to homogeneity, regularity results for minimizers of  a class of nonhomogenous bulk integrands with $p$-growth in \eqref{introeq:generalprob} have been obtained in \cite{FMT}, whose results we extend to the variable growth setting.

\noindent \emph{Description of our results.} In this paper, we focus on the scalar-valued case $u\colon \Omega \to \mathbb{R}$ and consider $f(x, \xi)= |\xi|^{p(x)}+h(x,\xi)$ in \eqref{introeq:generalprob}, where $h$ is a continuous function, convex in $\xi$ and has $p(x)$-growth. We assume that the variable exponent is strongly log-H\"older continuous (see \eqref{eq:rinfologhold}) and show that minimizers of  the weak formulation \eqref{introeq:generalprob} are strong minimizers in the sense  clarified above.

The proof of the crucial density lower bound goes, exactly as in  \cite{DeGCarLea, FMT}, through a decay Lemma (see Lemma \ref{lem:decay}). One assumes by contradiction that the energy is decaying faster than $\rho^{d-1}$ around a jump point $x_0$. Setting $\bar p=p(x_0)$, one may exploit smallness of the energy to show that a scaled copy of blown-up sequences converges to a smooth minimizer of a variational integral of the type
\[
\int_{B_1} (|\nabla u|^{\bar p}+ h_\infty(\nabla u))\,\mathrm{d}x
\]
for which decay estimates independent of $\bar p$ are available by \cite{AM2}, and provide a contradiction. The function $h_\infty$ is recovered as locally uniform limit of a scaled version of $h$ acting on a scaled deformation gradient. The strong log-H\"older continuity assumption is crucial for a proper choice of the scaling constants, despite the presence of a variable exponent, in order to ensure minimality of the limit function. This also requires the proof of a $\Gamma$-liminf type inequality (see Step 1 of Theorem \ref{gammaconv}) for sequences of functionals with variable growth, which is achieved by means of the Lusin-type approximation in \cite[Theorem~3.1]{DCLV}, recalled in Theorem \ref{thm:dclvLip}. With the main tools of  Theorem \ref{gammaconv} and Lemma \ref{lem:decay} at our disposal, we can prove the density lower bound and eventually recover existence of strong minimizers in Theorem \ref{thm:main}.

\noindent \emph{Outline of the paper.} The paper is structured as follows. {In Section~\ref{sec: prel} we fix the basic notation and recall some definitions in variable exponent spaces (Subsection~\ref{sec:variableexp}) and in $SBV$ (Subsection~\ref{s:bv}), while Subsection~\ref{sec:gsbvp(x)} deals with the space $SBV^{p(\cdot)}$, the Poincar\'e inequality and some of its consequences useful in the sequel. In Subsection~\ref{sec:lusinsbpx} we recall a Lusin-type approximation result in $SBV^{p(\cdot)}$, while Subsection~\ref{sec:freepx} contains some definitions and results for free-discontinuity problems in the variable exponent setting. Section~\ref{sec:smalljump} is entirely devoted to the proof of a technical tool concerning the asymptotic behavior of almost minimizers with small jump sets. The main result of the paper is contained in Section~\ref{sec:strongmin}: in Subsection~\ref{sec:mainassump} we state the problem and list the main assumptions on the energies, in Subsection~\ref{sec:decay} we establish a crucial decay estimate for our functionals while in Subsection~\ref{sec:densitylowerb} we prove a density lower bound and then the main result with Theorem~\ref{thm:mainthm}.}

\section{Basic notation and preliminaries}\label{sec: prel}

 We start with some basic notation.   Let $\Omega \subset \R^d$  be  open and bounded.
 For every $x\in \Rd$ and $r>0$ we indicate by $B_r(x) \subset \Rd$ the open ball with center $x$ and radius $r$. If $x=0$, we will often use the shorthand $B_r$.  For $x$, $y\in \Rd$, we use the notation $x\cdot y$ for the scalar product and $|x|$ for the  Euclidean  norm.   Moreover, we let  $\Sd:=\{x \in \Rd \colon |x|=1\}$ and we denote by $\R^d_0$ the set $\R^d\setminus\{0\}$. The $m$-dimensional Lebesgue measure of the unit ball in $\R^m$ is indicated by $\gamma_m$ for every $m \in \N$.   We denote by $\Ld$ and $\mathcal{H}^k$ the $d$-dimensional Lebesgue measure and the $k$-dimensional Hausdorff measure, respectively.  
The closure of $A$ is denoted by $\overline{A}$. The diameter of $A$ is indicated by ${\rm diam}(A)$.  
We write $\chi_A$ for the  characteristic  function of any $A\subset  \R^d$, which is 1 on $A$ and 0 otherwise.  If $A$ is a set of finite perimeter, we denote its essential boundary by $\partial^* A$,   see  \cite[Definition 3.60]{Ambrosio-Fusco-Pallara:2000}.   

\subsection{Variable exponent Lebesgue spaces}\label{sec:variableexp}

We briefly recall the notions of variable exponents and variable exponent Lebesgue spaces. We refer the reader to \cite{DHHR} for a comprehensive treatment of the topic. 

A measurable function $p:\Omega\to[1,+\infty)$ will be called a \emph{variable exponent}. Correspondingly, for every $A\subset\Omega$ we define
\begin{equation*}
p^+_A:=\mathop{{\rm ess}\,\sup}_{x\in A} p(x)\,\mbox{\,\, and \,\,}p^-_A:=\mathop{{\rm ess}\,\inf}_{x\in A} p(x)\,,
\end{equation*}
while $p^+_\Omega$ and $p^-_\Omega$ will be denoted by $p^+$ and $p^-$, respectively.

For a measurable function $u:\Omega\to\R$ we define the \emph{modular} as
\begin{equation*}
\varrho_{p(\cdot)}(u):=\int_\Omega|u(x)|^{p(x)}\,\mathrm{d}x
\end{equation*}
and the (Luxembourg) \emph{norm}
\begin{equation*}
\|u\|_{L^{p(\cdot)}(\Omega)}:=\inf\{\lambda>0:\,\, \varrho_{p(\cdot)}(u/\lambda)\leq1\}\,.
\end{equation*}
The \emph{variable exponent Lebesgue space} $L^{p(\cdot)}(\Omega)$ is defined as the set of measurable functions $u$ such that $\varrho_{p(\cdot)}(u/\lambda)<+\infty$ for some $\lambda>0$. In the case $p^+<+\infty$, $L^{p(\cdot)}(\Omega)$ coincides with the set of functions such that $\varrho_{p(\cdot)}(u)$ is finite. It can be checked that $\|\cdot\|_{L^{p(\cdot)}(\Omega)}$ is a norm on $L^{p(\cdot)}(\Omega)$. Moreover, if $p^+<+\infty$, it holds that
\begin{equation}
\varrho_{p(\cdot)}(u)^\frac{1}{p^+} \leq \|u\|_{L^{p(\cdot)}(\Omega)} \leq \varrho_{p(\cdot)}(u)^\frac{1}{p^-}
\label{eq:normineq}
\end{equation}
if $\|u\|_{L^{p(\cdot)}(\Omega)}>1$, while an analogous inequality holds by exchanging the role of $p^-$ and $p^+$ if $0\leq \|u\|_{L^{p(\cdot)}(\Omega)}\leq 1$. Another useful property of the modular, in the case $p^+<+\infty$, is the following one: 
\begin{equation}
\min\{\lambda^{p^+}, \lambda^{p^-}\}\varrho_{p(\cdot)}(u) \leq \varrho_{p(\cdot)}(\lambda u) \leq \max\{\lambda^{p^+}, \lambda^{p^-}\}\varrho_{p(\cdot)}(u)
\label{eq:normineq2}
\end{equation}
for all $\lambda>0$.

We say that a function $p:\Omega\to\R$ is \emph{log-H\"older continuous} on $\Omega$ if the modulus of continuity for $p(x)$ satisfies
\[
 \omega(|x-y|)\leq \frac{C}{-\log |x-y|}\,,\quad \forall x,y\in\Omega\,,\, |x-y|\leq\frac{1}{2}\,.
\]
with $C$ a positive constant. In other words 
\begin{equation}\label{eq:loghold}
\limsup_{\rho\to 0}\omega(\rho)\log\left(\frac{1}{\rho}\right)<+\infty.
\end{equation}
To prove our regularity result the previous condition {will be reinforced into the \emph{strong log-H\"older continuity}}
\begin{equation}\label{eq:rinfologhold}
\limsup_{\rho\to 0}\omega(\rho)\log\left(\frac{1}{\rho}\right)=0,
\end{equation}
in complete accordance with the theory of regularity in the variable Sobolev framework (see \cite{AM2}).


The following lemma provides an extension to the variable exponent setting of the well-known embedding property of classical Lebesgue spaces (see, e.g., \cite[Corollary~3.3.4]{DHHR}).

\begin{lemma}\label{embedding}
Let $p,q$ be measurable variable exponents on $\Omega$, and assume that $\mathcal{L}^d(\Omega)<+\infty$. Then $L^{p(\cdot)}(\Omega)\hookrightarrow L^{q(\cdot)}(\Omega)$ if and only if $q(x)\leq p(x)$ for $\mathcal{L}^d$-a.e. $x$ in $\Omega$. The embedding constant is less or equal to $2(1+\mathcal{L}^d(\Omega))$ and $2\max\{\mathcal{L}^d(\Omega)^{(\frac{1}{q}-\frac{1}{p})^+, (\frac{1}{q}-\frac{1}{p})^-}\}$.
\label{lem:embed}
\end{lemma}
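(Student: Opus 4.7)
The plan is to combine the generalised H\"older inequality for variable exponents (sufficiency) with a concentration argument (necessity).

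For the sufficiency, assume $q(x)\le p(x)$ a.e.\ and introduce the measurable exponent $r\colon\Omega\to[1,+\infty]$ by $\tfrac{1}{r(x)}:=\tfrac{1}{q(x)}-\tfrac{1}{p(x)}\ge 0$ (with the convention $r(x)=+\infty$ whenever $p(x)=q(x)$). The generalised H\"older inequality for variable-exponent Lebesgue spaces (see, e.g., \cite[Lemma~3.2.20]{DHHR}) applied to the factorisation $u=u\cdot 1$ yields
\[
\|u\|_{L^{q(\cdot)}(\Omega)}\;\le\;2\,\|u\|_{L^{p(\cdot)}(\Omega)}\,\|1\|_{L^{r(\cdot)}(\Omega)},
\]
so the sufficiency boils down to estimating $\|1\|_{L^{r(\cdot)}(\Omega)}=\inf\{\lambda>0:\int_\Omega\lambda^{-r(x)}\dx\le 1\}$. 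For the coarser constant $2(1+\mathcal{L}^d(\Omega))$, I would test with $\lambda=\max\{1,\mathcal{L}^d(\Omega)\}\ge 1$: since $r(x)\ge 1$, the pointwise bound $\lambda^{-r(x)}\le \lambda^{-1}$ yields $\int_\Omega\lambda^{-r(x)}\dx\le \mathcal{L}^d(\Omega)/\lambda\le 1$, hence $\|1\|_{L^{r(\cdot)}}\le 1+\mathcal{L}^d(\Omega)$. For the sharper constant I would split cases: if $\mathcal{L}^d(\Omega)\ge 1$, test $\lambda=\mathcal{L}^d(\Omega)^{1/r^-}\ge 1$, so that $\lambda^{-r(x)}\le \lambda^{-r^-}$ and $\int_\Omega\lambda^{-r(x)}\dx\le\mathcal{L}^d(\Omega)\lambda^{-r^-}=1$; if $\mathcal{L}^d(\Omega)<1$, test $\lambda=\mathcal{L}^d(\Omega)^{1/r^+}\le 1$, so that $\lambda^{-r(x)}\le\lambda^{-r^+}$ and the same calculation holds. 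Using $\tfrac{1}{r}=\tfrac{1}{q}-\tfrac{1}{p}$, these two bounds collapse into
\[
\|1\|_{L^{r(\cdot)}(\Omega)}\le\max\bigl\{\mathcal{L}^d(\Omega)^{(1/q-1/p)^+},\,\mathcal{L}^d(\Omega)^{(1/q-1/p)^-}\bigr\}.
\]

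For the necessity, I would argue by contradiction. If $\mathcal{L}^d(\{q>p\})>0$, then by $\sigma$-subadditivity there exists $\eps>0$ such that $E_\eps:=\{x\in\Omega:q(x)\ge p(x)+\eps\}$ has positive Lebesgue measure. Choose measurable $F_n\subset E_\eps$ with $0<\mathcal{L}^d(F_n)\to 0$, and using the continuity of $\lambda\mapsto \int_{F_n}\lambda^{p(x)}\dx$ pick $\lambda_n>0$ with $\varrho_{p(\cdot)}(\lambda_n\chi_{F_n})=1$; the vanishing of $\mathcal{L}^d(F_n)$ forces $\lambda_n\to+\infty$. Setting $u_n:=\lambda_n\chi_{F_n}$, the definition of the Luxembourg norm gives $\|u_n\|_{L^{p(\cdot)}}=1$, while
\[
\varrho_{q(\cdot)}(u_n)\;=\;\int_{F_n}\lambda_n^{q(x)}\dx\;\ge\;\lambda_n^{\eps}\int_{F_n}\lambda_n^{p(x)}\dx\;=\;\lambda_n^{\eps}\;\to\;+\infty.
\]
From \eqref{eq:normineq} this forces $\|u_n\|_{L^{q(\cdot)}}\ge\lambda_n^{\eps/q^+}\to +\infty$, contradicting the assumed boundedness of the embedding.

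The principal technical obstacle is not the overall structure, which is classical, but the bookkeeping needed for the refined embedding constant: the cases $\mathcal{L}^d(\Omega)\gtrless 1$ must be kept distinct and in each case the test value of $\lambda$ chosen so that the pointwise control on $\lambda^{-r(x)}$ points the correct way. The only nontrivial imported ingredient is the generalised H\"older inequality with constant $2$ for variable exponents; the coarser bound $2(1+\mathcal{L}^d(\Omega))$ can be produced without the case split, at the price of losing sharpness.
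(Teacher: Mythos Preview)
The paper does not supply its own proof of this lemma; it simply cites \cite[Corollary~3.3.4]{DHHR}. Your argument is essentially the standard one found there: the generalised H\"older inequality with the auxiliary exponent $\tfrac1r=\tfrac1q-\tfrac1p$ for sufficiency, and a concentration/blow-up of characteristic functions for necessity. So the approach is aligned with the cited reference.

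One minor technical point to tighten: your formula $\|1\|_{L^{r(\cdot)}}=\inf\{\lambda>0:\int_\Omega\lambda^{-r(x)}\dx\le1\}$ tacitly assumes $r(x)<+\infty$ a.e., i.e.\ that $p>q$ a.e. When $\{p=q\}$ has positive measure the modular acquires an $L^\infty$-part, and your test values of $\lambda$ need to be checked against that as well (in the case $\mathcal L^d(\Omega)<1$ with $r^+=+\infty$ your choice $\lambda=1$ does not obviously give modular $\le1$). This is easy to patch---for instance by treating the set $\{p=q\}$ separately, where the two norms coincide---but should be made explicit. Similarly, in the necessity argument your use of \eqref{eq:normineq} presumes $q^+<+\infty$; if $q$ is unbounded you should first intersect $E_\eps$ with a sublevel set $\{q\le M\}$ of positive measure.
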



\subsection{$BV$ and $SBV$ functions}\label{s:bv}

For a general survey on the spaces of $BV$ and $SBV$ functions 
we refer for instance to \cite{Ambrosio-Fusco-Pallara:2000}. Below, we just recall some basic definitions useful in the sequel. 

If  $u\in L^1_{\rm loc}(\Omega)$ and $x\in\Omega$, the {\it precise representative of $u$ at $x$} is defined
as the unique value $\widetilde{u}(x)\in\R$ such that
$$
\lim_{\rho\to 0^+} \frac{1}{\rho^d}\int_{{B_\rho(x)}}\!|u(y)-\widetilde{u}(x)|\,\d x=0\,.
$$
The set of points in $\Omega$ where the precise representative of $x$ is not defined is called the
{\it approximate singular set} of $u$ and denoted by $S_u$. We say that a point $x\in\Omega$ is an approximate jump point of $u$
if there exist $a,b\in\R$ and $\nu\in\mathbb{S}^{d-1}$, such that $a\not = b$ and
$$
\lim_{\rho\to 0^+}\dashint_{B^+_\rho(x,\nu)} |u(y)-a|\, \d y=0
\qquad{\rm and}\qquad
\lim_{\rho\to 0^+}\dashint_{B^-_\rho(x,\nu)} |u(y)-b|\, \d y=0
$$
where $B^\pm_\rho(x,\nu):= \{y\in B_\rho(x)\ :\ \langle y-x,\nu\rangle\gtrless0\}$.
The triplet $(a,b,\nu)$ is uniquely determined by the previous formulas, up to a permutation
of $a,b$ and a change of sign of $\nu$, and it is denoted by $(u^+(x),u^-(x),\nu_u(x))$.
The Borel functions $u^+$ and $u^-$  are called the {\it upper and
lower approximate limit} of $u$ at the point $x\in\Omega$. The set of approximate jump points of $u$ is denoted by $J_u\subseteq S_u$.

The space ${BV}(\Omega)$ of {\it functions of bounded variation} is defined as the set
of all $u\in L^1(\Omega)$ whose distributional gradient $Du$ is a bounded Radon measure on $\Omega$
with values in $\R^d$.  Moreover, the usual decomposition
\begin{equation*}
Du = \nabla u\,\mathcal{L}^d + D^c u + (u^+-u^-)\otimes \nu_u\,\mathcal{H}^{d-1}\ristretto{J_u}
\end{equation*}
\noindent
holds, where $\nabla u$ is the Radon-Nikod\'ym derivative of $Du$ with respect to the Lebesgue measure and $D^cu$ is  the {\it Cantor part} of $Du$. {If $u\in BV(\Omega)$, then $\nabla u(x)$ is the  \emph{approximate gradient} of $u$ for a.e. $x\in\Omega$:
\[
\lim_{\rho\to 0}\-int_{B_\rho(x)}\frac{|u(y)-u(x)-\nabla u(x)(y-x)}{|y-x|}\,\mathrm{d}y=0\,.
\]}
For the sake of simplicity, we denote by $D^su =D^c u + (u^+-u^-)\otimes \nu_u\,\mathcal{H}^{d-1}\ristretto{J_u}$.
{{If $u\in{BV}(\Omega)$, then $\mathcal{H}^{d-1}(S_u\setminus J_u)=0$; so in the sequel we shall essentially identify the two sets.}}

We recall that the space ${SBV}(\Omega)$ of {\it special functions of bounded variation} is defined as the set
of all $u\in BV(\Omega)$ such that $D^su$ is concentrated on $S_u$; i.e., $|D^su|(\Omega\setminus S_u)=0$. Finally, for $p>1$ the space $SBV^p(\Omega)$ is the set of $u\in SBV(\Omega)$ with $\nabla u\in L^p(\Omega;\R^d)$ and $\mathcal{H}^{d-1}(S_u)<\infty$.

\subsection{The space $SBV^{p(\cdot)}$. Poincar\'e-type inequality} \label{sec:gsbvp(x)}

We denote by $SBV^{p(\cdot)}(\Omega)$ the set of functions $u\in SBV(\Omega)$ with $\nabla u\in L^{p(\cdot)}(\Omega;\mathbb{R}^{d})$ and $\mathcal{H}^{d-1}(S_u)<+\infty$. 



In order to state a Poincar\'e-Wirtinger inequality in $SBV^{p(\cdot)}$, we first fix some notation, following \cite{CL}. With given $a$, $b\in\R$, we denote $a\wedge b:=\min(a,b)$ and $a\vee b:=\max(a,b)$. Let $B$ be a ball in $\R^d$. For every measurable function $u:B\to\R$, we set
\begin{equation*}
u_*(s;B):= \inf\{t\in\R:\,\, \mathcal{L}^d(\{u<t\}\cap B)\geq s\} \qquad \mbox{ for } 0\leq s \leq \mathcal{L}^d(B),
\end{equation*}
and
\begin{equation*}
\quad {\rm med}(u;B):=u_*\left(\frac{1}{2}\mathcal{L}^d(B);B\right).
\end{equation*}

For every $u\in SBV^{p(\cdot)}(\Omega)$ such that
\begin{equation*}
\left(2\gamma_{\rm iso}\mathcal{H}^{d-1}(S_u\cap B)\right)^{\frac{d}{d-1}} \leq \frac{1}{2}\mathcal{L}^d(B)\,,
\end{equation*}
we define
\begin{equation*}
\begin{split}
\tau'(u;B) & := u_*\left(\left(2\gamma_{\rm iso}\mathcal{H}^{d-1}(S_u\cap B)\right)^{\frac{d}{d-1}};B\right)\,, \\
\tau''(u;B) & := u_*\left(\mathcal{L}^d(B)-\left(2\gamma_{\rm iso}\mathcal{H}^{d-1}(S_u\cap B)\right)^{\frac{d}{d-1}};B\right)\,, 
\end{split}
\end{equation*}
and the truncation operator
\begin{equation}
T_Bu(x):= (u(x)\wedge \tau''(u;B)) \vee \tau'(u;B)\,,
\label{eq:truncated}
\end{equation}
where $\gamma_{\rm iso}$ is the dimensional constant in the relative isoperimetric inequality. 

For any $M>0$, we also define  
\begin{equation}
{u}^M:= M \wedge { u} \vee (-M)\,.
\label{eq:classictruncation}
\end{equation}

We recall the following Poincar\'e-Wirtinger inequality for $SBV$ functions with small jump set in a ball, which was first proven in the scalar setting in \cite[Theorem~3.1]{DeGCarLea}, and then extended to vector-valued functions in \cite[Theorem~2.5]{CL}.

\begin{theorem}
Let $u\in SBV(B)$ and assume that
\begin{equation}
\left(2\gamma_{\rm iso}\mathcal{H}^{d-1}(S_u\cap B)\right)^{\frac{d}{d-1}} \leq \frac{1}{2}\mathcal{L}^d(B)\,.
\label{(10)}
\end{equation}
If $1\leq p < d$ then the function $T_Bu$ satisfies $|DT_Bu(B)|\le 2\int_B|\nabla u|\,\mathrm{d}y$, 
\begin{equation}
\left(\int_B |T_Bu-{\rm med}(u;B)|^{p^*}\,\mathrm{d}x\right)^\frac{1}{p^*} \leq \frac{2\gamma_{\rm iso}p(d-1)}{d-p} \left(\int_B|\nabla u|^p\,\mathrm{d}x\right)^\frac{1}{p},
\label{(11)}
\end{equation}
and
\begin{equation}
\mathcal{L}^d(\{T_Bu\neq u\}\cap B) \leq 2 \left(2\gamma_{\rm iso}\mathcal{H}^{d-1}(S_u\cap B)\right)^{\frac{d}{d-1}}\,,
\label{(12)}
\end{equation}
where $p^*:=\frac{dp}{d-p}$.
If $p\ge d$, then, for any $q\ge 1$,
\begin{equation}\label{maggd}
{\left(\int_B |T_Bu-{\rm med}(u;B)|^{q}\,\mathrm{d}x\right)^\frac{1}{q} \leq c(q,N,\gamma_{\rm iso})(\mathcal{L}^d(B))^{\frac{1}{q}+\frac{1}{N}-\frac{1}{p}} \left(\int_B|\nabla u|^p\,\mathrm{d}x\right)^\frac{1}{p}\,.}
\end{equation}
\label{thm:poincsbv}
\end{theorem}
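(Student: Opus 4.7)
The plan is to combine the BV coarea formula with the relative isoperimetric inequality in $B$; the key point is that assumption~\eqref{(10)} forces, at each intermediate level, the boundary of $\{u<t\}$ to be dominated by its absolutely continuous contribution.

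Setting $v:=T_B u$, I would first observe that $\{v<t\}=\emptyset$ for $t\le\tau'$, coincides with $\{u<t\}$ for $\tau'<t\le\tau''$, and equals $B$ for $t>\tau''$, so by coarea $|Dv|(B)=\int_{\tau'}^{\tau''}P(\{u<t\},B)\,dt$ (the case $\tau'=\tau''$ being trivial, as $v$ is then constant). For a.e.\ $t$, the SBV structure of $u$ yields $P(\{u<t\},B)=A(t)+J(t)$ with $A(t):=\mathcal{H}^{d-1}(\{\tilde u=t\}\cap B)$ and $J(t):=\mathcal{H}^{d-1}(J_u\cap B\cap\{u^-<t\le u^+\})\le \mathcal{H}^{d-1}(S_u\cap B)$. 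For $t\in(\tau',\tau'')$, the quantile definitions together with~\eqref{(10)} force both $\mathcal{L}^d(\{u<t\}\cap B)$ and $\mathcal{L}^d(\{u\ge t\}\cap B)$ to be bounded below by $s:=(2\gamma_{\rm iso}\mathcal{H}^{d-1}(S_u\cap B))^{d/(d-1)}\le \mathcal{L}^d(B)/2$; the relative isoperimetric inequality in $B$ then gives $P(\{u<t\},B)\ge s^{(d-1)/d}/\gamma_{\rm iso}=2\mathcal{H}^{d-1}(S_u\cap B)\ge 2 J(t)$, whence $P(\{u<t\},B)\le 2A(t)$. Integrating in $t$ and using the identity $\int_{-\infty}^{+\infty}A(t)\,dt=\int_B|\nabla u|\,dy$ (the coarea formula for the absolutely continuous part of $Du$) produces the bound $|DT_B u|(B)\le 2\int_B|\nabla u|\,dy$. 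Estimate~\eqref{(12)} is then immediate from $\{T_B u\ne u\}\cap B\subseteq\{u<\tau'\}\cup\{u>\tau''\}$ and the quantile definitions.

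To derive~\eqref{(11)} for $p=1$ (so $p^*=d/(d-1)$), I would apply the BV Sobolev-Poincar\'e inequality to $w:=v-{\rm med}(u;B)$ to get $\|w\|_{L^{d/(d-1)}(B)}\le C_d |Dv|(B)\le 2C_d\int_B|\nabla u|\,dy$. For $1<p<d$, the same strategy is carried out on the truncated power $|w|^{\alpha}$ with $\alpha=p(d-1)/(d-p)$: the BV chain rule and H\"older's inequality control the absolutely continuous part of $|D(|w|^\alpha)|(B)$ by $\alpha\|w\|_{L^{p^*}(B)}^{\alpha-1}\|\nabla u\|_{L^p(B)}$, while the jump part is absorbed through the auxiliary estimate $|D^j v|(B)\le\int_B|\nabla u|\,dy$, which is a byproduct of the preceding step (indeed, integrating $J(t)\le A(t)$ over $(\tau',\tau'')$ yields exactly $|D^j v|(B)$ on the left). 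Rearranging produces~\eqref{(11)}. The bound~\eqref{maggd} for $p\ge d$ follows the same scheme combined with a Morrey-type embedding, accounting for the scaling factor $\mathcal{L}^d(B)^{1/q+1/d-1/p}$.

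The main obstacle is the case $1<p<d$ of~\eqref{(11)}: because $T_B u$ in general retains a portion of $J_u$, it is only in BV and not in $W^{1,p}$, so the classical Sobolev-Poincar\'e inequality cannot be invoked directly. The sharp estimate therefore has to be extracted from the BV chain rule applied to powers of $w$, with the jump contributions kept under control through the auxiliary bound $|D^j v|(B)\le \int_B|\nabla u|\,dy$ produced by the coarea argument.
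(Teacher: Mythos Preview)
The paper does not prove this theorem; it quotes it from \cite[Theorem~3.1]{DeGCarLea} and \cite[Theorem~2.5]{CL}. Your outline is precisely the classical argument of those references: coarea plus the relative isoperimetric inequality give the levelwise bound $J(t)\le A(t)$ on $(\tau',\tau'')$, hence $P(\{u<t\},B)\le 2A(t)$, from which $|DT_Bu|(B)\le 2\int_B|\nabla u|$, \eqref{(12)}, and the case $p=1$ of \eqref{(11)} follow as you describe (using that $\mathrm{med}(T_Bu;B)=\mathrm{med}(u;B)$ since $\tau'\le m\le\tau''$).

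Your treatment of \eqref{(11)} for $1<p<d$ has a gap, however. You propose to control the jump part of $|D(|w|^\alpha)|$ through the estimate $|D^jv|(B)\le\int_B|\nabla u|$, but these are different quantities: across $J_v$ the jump of $|w|^\alpha$ is of order $|w|^{\alpha-1}|w^+-w^-|$, and you have no a~priori bound on $\|w\|_\infty$ in terms of $\|\nabla u\|_{L^p}$. The proof in \cite{DeGCarLea} avoids the chain rule altogether. Since the superlevel sets of $(w_+)^\alpha$ coincide with superlevel sets of $v$, the levelwise inequality $P(\{v>t\},B)\le 2A(t)$ that you already proved applies directly to them; combined with Minkowski's integral inequality and the isoperimetric bound, this gives
\[
\|(w_+)^\alpha\|_{L^{d/(d-1)}}\le 2\gamma_{\rm iso}\int_0^{(\tau''-m)^\alpha} A\big(m+s^{1/\alpha}\big)\,ds
=2\gamma_{\rm iso}\,\alpha\int_m^{\tau''}(t-m)^{\alpha-1}A(t)\,dt\,.
\]
The weighted coarea formula for the absolutely continuous part of $Du$ then converts the right-hand side into $2\gamma_{\rm iso}\,\alpha\int_B(w_+)^{\alpha-1}|\nabla u|\,dx$ (on $\{m<u<\tau''\}$ one has $u=v$, so $u-m=w_+$), after which H\"older closes the argument with the sharp constant $\alpha=p(d-1)/(d-p)$ and no separate jump term to handle. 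The same modification works for $w_-$ and for \eqref{maggd}.
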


{As a first application of Theorem~\ref{thm:poincsbv} one can obtain the following sufficient condition for the existence of the approximate limit at a given point (see \cite[Theorem 7.8]{Ambrosio-Fusco-Pallara:2000}).
\begin{theorem}
Let $u\in SBV_{\rm loc}(\Omega)$ and $x\in\Omega$. If there exist $p,q>1$ such that
\begin{equation*}
\lim_{\rho\to0} \frac{1}{\rho^{d-1}} \left[\int_{B_\rho(x)}|\nabla u|^p\,\mathrm{d}y + \mathcal{H}^{d-1}(S_u\cap B_\rho(x)) \right] =0 \quad \mbox{and} \quad \mathop{\lim\sup}_{\rho\to0} \dashint_{B_\rho(x)} |u(y)|^q \,\mathrm{d}y <\infty\,,
\end{equation*}
then $x\not\in S_u$.
\label{thm:thm7.8AFP}
\end{theorem}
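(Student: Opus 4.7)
\emph{Proof strategy.} Without loss of generality assume $x=0$; the goal is to produce $\alpha\in\R$ with $\dashint_{B_\rho}|u-\alpha|\,\mathrm{d}y\to 0$ as $\rho\to 0^+$, which gives existence of the approximate limit at $0$ and hence $0\notin S_u$. Set $\omega(\rho):=\rho^{1-d}\bigl[\int_{B_\rho}|\nabla u|^p\,\mathrm{d}y+\mathcal{H}^{d-1}(S_u\cap B_\rho)\bigr]$, so $\omega(\rho)\to 0$. For $\rho$ small the smallness condition \eqref{(10)} on $B_\rho$ holds, so Theorem~\ref{thm:poincsbv} applies. Write $u_\rho:=T_{B_\rho}u$ and $m_\rho:={\rm med}(u;B_\rho)$; for $p<d$ (the case $p\ge d$ is analogous via \eqref{maggd}), the scaling identity $\tfrac{1}{p}-\tfrac{1}{p^*}=\tfrac{1}{d}$ turns \eqref{(11)} into
\begin{equation*}
\Big(\dashint_{B_\rho}|u_\rho-m_\rho|^{p^*}\,\mathrm{d}y\Big)^{1/p^*}\le C\rho\Big(\dashint_{B_\rho}|\nabla u|^p\,\mathrm{d}y\Big)^{1/p}\le C\,\rho^{(p-1)/p}\omega(\rho)^{1/p}\to 0\,,
\end{equation*}
while \eqref{(12)} yields $\mathcal{L}^d(\{u_\rho\neq u\}\cap B_\rho)\le C[\omega(\rho)\rho^{d-1}]^{d/(d-1)}=o(\rho^d)$.

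Next I would show that $\{m_\rho\}$ is Cauchy as $\rho\to 0^+$. Along the dyadic sequence $\rho_k:=2^{-k}\rho_0$, on the "good" subset of $B_{\rho_{k+1}}$ where $u_{\rho_k}=u=u_{\rho_{k+1}}$ — which for $k$ large fills at least half of $B_{\rho_{k+1}}$ by the measure estimate above — the constant $m_{\rho_k}-m_{\rho_{k+1}}$ coincides with $(u_{\rho_k}-m_{\rho_k})-(u_{\rho_{k+1}}-m_{\rho_{k+1}})$. Estimating this identity in $L^{p^*}(B_{\rho_{k+1}})$ by the triangle inequality and using the previous bound on both balls $B_{\rho_k}\supset B_{\rho_{k+1}}$ and $B_{\rho_{k+1}}$ gives
\begin{equation*}
|m_{\rho_k}-m_{\rho_{k+1}}|\le C\bigl[\rho_k^{(p-1)/p}\omega(\rho_k)^{1/p}+\rho_{k+1}^{(p-1)/p}\omega(\rho_{k+1})^{1/p}\bigr]\,.
\end{equation*}
Since $p>1$, the right-hand side is summable in $k$, hence $m_{\rho_k}\to\alpha$ for some $\alpha\in\R$; an analogous comparison across adjacent non-dyadic scales upgrades this to $\lim_{\rho\to 0^+}m_\rho=\alpha$.

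To conclude, I would split
\begin{equation*}
\dashint_{B_\rho}|u-\alpha|\,\mathrm{d}y=\dashint_{B_\rho}|u-\alpha|\,\chi_{\{u_\rho=u\}}\,\mathrm{d}y+\dashint_{B_\rho}|u-\alpha|\,\chi_{\{u_\rho\neq u\}}\,\mathrm{d}y\,.
\end{equation*}
On $\{u_\rho=u\}$ the pointwise bound $|u-\alpha|\le|u_\rho-m_\rho|+|m_\rho-\alpha|$ together with Jensen's inequality and the Poincar\'e estimate make this contribution vanish. On $\{u_\rho\neq u\}$, H\"older's inequality with exponent $q$, combined with the uniform bound $\dashint_{B_\rho}|u-\alpha|^q\,\mathrm{d}y\le 2^{q-1}(\dashint_{B_\rho}|u|^q\,\mathrm{d}y+|\alpha|^q)=O(1)$ and with $\mathcal{L}^d(\{u_\rho\neq u\}\cap B_\rho)/\mathcal{L}^d(B_\rho)=o(1)$, produces a contribution of order $O(1)\cdot o(1)^{1-1/q}\to 0$. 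Hence $\dashint_{B_\rho}|u-\alpha|\,\mathrm{d}y\to 0$, and $0\notin S_u$.

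The most delicate point is the last one: the Poincar\'e inequality only controls the truncated function $u_\rho$, whose image is clipped to $[\tau',\tau'']$, so it provides no a priori bound on $u$ itself on the bad set $\{u_\rho\neq u\}$. The hypothesis $\limsup_\rho\dashint_{B_\rho}|u|^q\,\mathrm{d}y<\infty$ — which is an integrability upgrade on $u$ itself rather than on $\nabla u$ — is precisely what prevents (potentially large) values of $u$ on the vanishing-measure bad set from producing a non-vanishing $L^1$-average, and without it the argument would break down despite the Poincar\'e estimate being otherwise sufficient.
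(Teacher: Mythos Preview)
Your proposal is correct and follows the standard argument from \cite[Theorem~7.8]{Ambrosio-Fusco-Pallara:2000}, which the paper merely cites without reproducing a proof. The use of the Poincar\'e--Wirtinger inequality (Theorem~\ref{thm:poincsbv}) to control the truncation $u_\rho$, the Cauchy estimate on the medians $m_\rho$ along dyadic scales, and the final splitting into the good set $\{u_\rho=u\}$ and the bad set handled via H\"older and the $L^q$-hypothesis is exactly the route taken in that reference.
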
}

{Another consequence of Theorem~\ref{thm:poincsbv} is the following compactness result, which is a slight extension of the result established in \cite[Theorem~2.8]{SSS} (see also \cite[Theorem~4.1]{DCLV} for a related result under the additional stronger assumption \eqref{eq:loghold}).} 
Motivated by the blow-up analysis of Lemma~\ref{lem:decay}, we will prove the result for a fixed ball and a uniformly convergent sequence of continuous variable exponents $p_h:B\to (1,+\infty)$ satisfying:
\begin{equation}\label{eq:assump}
1<p^-\le p_h(y)\le p^+<+\infty,\ \ \ \ \ \ \forall y\in B,
\end{equation}
where $p^-$ and $p^+$ satisfy
\begin{equation}\label{eq:assump2}
p^-<d \ \hbox{ and }\  p^+< (p^-)^* \ \hbox{ or } \ p^-\ge d.
\end{equation}

\begin{theorem}\label{thm:3.5}
Let $B\subset\Omega$ be a ball, $(p_h)_{h\in\N}$ be a sequence of variable exponents $p_h:B\to(1,+\infty)$ complying with \eqref{eq:assump} and \eqref{eq:assump2} and converging uniformly to some $\bar{p}:B\to(1,+\infty)$ in $B$. Let $\{u_h\}_{h\in\mathbb{N}}\subset SBV^{p_h(\cdot)}(B)$ be such that
\begin{equation}
\sup_{h\in\mathbb{N}}\int_B |\nabla u_h|^{p_h(y)}\,\mathrm{d}y < +\infty\,,\quad \lim_{h\to+\infty} \mathcal{H}^{d-1}(S_{u_h}\cap B)=0\,.
\label{eq:ebounded}
\end{equation}
Then there exist a function $u_0\in W^{1,\bar{p}(\cdot)}(B)$ and a subsequence (not relabeled) of $\{u_h\}$ such that
\begin{equation}\label{eq:claims}
\begin{split}
& (i) \,\,\lim_{h\to+\infty}\int_B |T_B{u}_h-{\rm med}(u_h;B)-u_0|^{p_h(y)}\,\mathrm{d}y=0\,,\\
 &(ii) \, {u}_h-{\rm med}(u_h;B) \to u_0 \quad  \mathcal{L}^d-\hbox{a.e. in $B$}\,,\\
&(iii)\int_B|\nabla u_0|^{\bar {p}(y)}\,\mathrm{d}y\le \liminf_{h\to+\infty}\int_B|\nabla u_h|^{p_h(y)}\,\mathrm{d}y\,,\\
&(iv) \lim_{h\to+\infty}{\mathcal L}^d(\{T_B u_h\neq u_h\}\cap B)=0\,.
\end{split}
\end{equation}
\end{theorem}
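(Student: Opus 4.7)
The plan is to adapt the fixed-exponent compactness arguments of \cite[Thm.~2.8]{SSS} and \cite[Thm.~4.1]{DCLV} to the setting in which the exponents themselves vary with $h$; the one genuinely new ingredient is a lower-semicontinuity inequality that must handle the moving exponent. Since $\mathcal{H}^{d-1}(S_{u_h}\cap B)\to 0$, condition \eqref{(10)} is satisfied for $h$ large, so Theorem~\ref{thm:poincsbv} applies; set $w_h:=T_Bu_h-\mathrm{med}(u_h;B)$. The pointwise bound $|\xi|^{p^-}\le 1+|\xi|^{p_h(y)}$ together with \eqref{eq:ebounded} gives $\sup_h\|\nabla u_h\|_{L^{p^-}(B)}<+\infty$, and Theorem~\ref{thm:poincsbv} then provides $\sup_h\|w_h\|_{L^{(p^-)^*}(B)}<+\infty$ (or in $L^q(B)$ for any $q\ge 1$ if $p^-\ge d$) as well as $\sup_h|Dw_h|(B)<+\infty$.

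By $BV$-compactness, up to a (not relabeled) subsequence $w_h\to u_0$ in $L^1(B)$ and a.e., for some $u_0\in BV(B)$; this is claim (ii). Applying Ambrosio's $SBV$-closure to the truncated family $w_h^M:=M\wedge w_h\vee(-M)$ at each level $M$ and then letting $M\to+\infty$ (using the $L^{(p^-)^*}$-bound on $w_h$ to pass to the limit in $M$) produces $u_0\in SBV(B)$, $\nabla w_h\rightharpoonup\nabla u_0$ weakly in $L^{p^-}(B;\R^d)$, and $\mathcal{H}^{d-1}(S_{u_0})\le\liminf_h\mathcal{H}^{d-1}(S_{w_h})=0$; therefore $u_0\in W^{1,p^-}(B)$. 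Claim (iv) is immediate from \eqref{(12)}.

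The core new step is (iii). Using $\nabla u_h=\nabla w_h$ on $\{u_h=T_Bu_h\}$ together with \eqref{(12)}, it suffices to prove the lsc with $w_h$ in place of $u_h$. For every $A\in C^\infty(\overline{B};\R^d)$, the tangent-plane inequality for the convex map $\xi\mapsto|\xi|^{p_h(y)}$ reads
\[
\int_B|\nabla w_h|^{p_h(y)}\,dy\ge\int_B\bigl[-(p_h-1)|A|^{p_h}+p_h|A|^{p_h-2}A\cdot\nabla w_h\bigr]\,dy.
\]
The uniform convergence $p_h\to\bar p$, combined with boundedness of $|A|$, makes the scalar integrand converge by dominated convergence and yields $p_h|A|^{p_h-2}A\to\bar p|A|^{\bar p-2}A$ uniformly in $y$, hence strongly in every $L^r$; paired with the weak $L^{p^-}$-convergence of $\nabla w_h$, this identifies the limit of the linear term. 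Taking the supremum of the resulting inequality over $A$ in a countable set dense in $L^{\bar p(\cdot)}(B;\R^d)$ (dense because $\bar p$ is continuous and bounded) reconstructs $\int_B|\nabla u_0|^{\bar p(y)}\,dy$ on the right-hand side, proving (iii) and showing $u_0\in W^{1,\bar p(\cdot)}(B)$.

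Finally, for (i) the $L^{(p^-)^*}$-bound with $(p^-)^*>p^+$ (or the arbitrary integrability when $p^-\ge d$) makes $\{|w_h|^{p^+}\}$ equi-integrable; Vitali plus a.e.\ convergence then delivers $w_h\to u_0$ strongly in $L^{p^+}(B)$, and the elementary bound $|z|^{p_h(y)}\le|z|^{p^-}+|z|^{p^+}$ yields $\int_B|w_h-u_0|^{p_h(y)}\,dy\to 0$. The main obstacle I foresee is the semicontinuity step itself: a reduction to a fixed sub-exponent $\bar p-\epsilon$ leaves an $\epsilon$-loss that cannot be removed, so the moving exponent must be kept throughout, and the uniform convergence of $p_h$ together with density of smooth functions in $L^{\bar p(\cdot)}$ is what makes the duality argument close. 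A secondary technicality is the lack of a uniform $L^\infty$-bound on $w_h$ when invoking Ambrosio's closure, handled by the level-set truncation sketched above.
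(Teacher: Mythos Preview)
Your proof is correct and follows the same overall architecture as the paper: apply Theorem~\ref{thm:poincsbv} to get uniform $L^{(p^-)^*}$ and $BV$ bounds on $w_h=T_Bu_h-\mathrm{med}(u_h;B)$, use $BV$-compactness plus level truncations and Ambrosio's $SBV$-closure to identify $u_0\in W^{1,p^-}(B)$, deduce (iv) from \eqref{(12)}, and obtain (i) via equi-integrability of $|w_h|^{p^+}$ (from $(p^-)^*>p^+$, resp.\ \eqref{maggd} if $p^-\ge d$).

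The genuine difference is in the proof of (iii). The paper observes that $f(p,\xi):=|\xi|^{p\vee 1}$ is continuous and convex in $\xi$, and then simply invokes De Giorgi's classical lower-semicontinuity theorem for the functional $\int_B f(p(y),w(y))\,\mathrm{d}y$ with $p_h\to\bar p$ strongly in $L^1(B)$ and $\nabla \bar u_h^M\rightharpoonup\nabla u_0^M$ weakly in $L^1$; letting $M\to\infty$ finishes the argument in a couple of lines. Your route is instead a direct convex-duality argument: the tangent-plane inequality linearizes the integrand, the uniform convergence $p_h\to\bar p$ handles the smooth coefficients, the weak $L^{p^-}$-convergence of $\nabla w_h$ (which you correctly extract from the truncations using the uniform $L^{(p^-)^*}$ bound) takes care of the linear term, and density of $C^\infty$ in $L^{\bar p(\cdot)}$ recovers the optimal test field $A=\nabla u_0$. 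The paper's approach is shorter and exploits a ready-made theorem by absorbing the moving exponent as an extra ``spatial'' variable; yours is more elementary and self-contained, and makes explicit why no $\varepsilon$-loss occurs despite the moving exponent. Two minor points: your identification of (ii) is for $w_h$, whereas the statement concerns $u_h-\mathrm{med}(u_h;B)$, so a further subsequence (via (iv) and convergence in measure) is needed; and the weak $L^{p^-}$-convergence of $\nabla w_h$ to $\nabla u_0$ deserves the one-line justification you allude to, namely the uniform smallness of $\|\nabla w_h\chi_{\{|w_h|>M\}}\|_{L^1}$ in $h$ coming from the $L^{(p^-)^*}$ bound.
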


\begin{proof}
The extension with respect to Theorem 2.8 in \cite{SSS} regards only the lower semicontinuity inequality in \eqref{eq:claims}. We repeat the argument, since this gives us the occasion to develop some details.

\noindent First, observe that ${\bar p}(\cdot)$ also satisfies \eqref{eq:assump} with the same constants. We set for brevity $\bar{u}_h:=T_B{u}_h-{\rm med}(u_h;B)$ and we distinguish two cases, according to the values of $p^-$ and $p^+$. 

{\bf Step~1.} Here we assume that
\[
p^-<d \hbox{ and }p^+< (p^-)^*.
\]
By Theorem \ref{thm:poincsbv} and \eqref{eq:ebounded} we have
\begin{equation}
\label{boundpoin}
\sup_{h\in\mathbb{N}} \|\bar{u}_h\|_{L^{(p^-)^*}(B)}  <+\infty
\end{equation}
and $|D \bar{u}_h|(B)\le 2\int_B|\nabla u_h|\,\mathrm{d}y$. By the compactness theorem for $BV$ functions (see for example \cite[Theorem 3.23]{Ambrosio-Fusco-Pallara:2000})  there exists a function $u_0\in BV(B)$ and a subsequence (not relabeled) of ${\bar u}_h$ such that $\bar{u}_h \rightharpoonup u_0$ weakly$^{*}$ in $BV$ (thus, in particular, in $L^1$). 
Now, for every $M>0$, let ${\bar u}_h^M$ and $u_0^M$ be the truncations of ${\bar u}_h$ and $u_0$, respectively, according to \eqref{eq:classictruncation}. 
Then, applying the compactness theorem in $SBV$ (\cite[Theorem 4.8]{Ambrosio-Fusco-Pallara:2000}) to ${\bar u}_h^M$ we get that $u_0^M\in SBV(B)$, ${\bar u}_h^M\to u_0^M$ in $L^q$, for any $q>1$, $\nabla {\bar u}_h^M\rightharpoonup\nabla u_0^M$ weakly in $L^1$ and 
\begin{equation}\label{nojump}
\mathcal{H}^{d-1}(S_{u_0^M}) \leq \mathop{\lim\inf}_{h\to+\infty} \mathcal{H}^{d-1}(S_{{\bar u}_h^M})=0\,,
\end{equation}
meaning that $u_0^M\in W^{1,1}(B)$. Now we apply De Giorgi's semicontinuity Theorem (\cite{DeG}) to the integral functional $\int_Bf(p(y), w(y))\,\mathrm{d}y$ defined by $f:\mathbb{R}\times\mathbb{R}^d\to [0,+\infty)$ as 
$f(p,\xi)=|\xi|^{p\vee 1}$.
The continuous function $f$ is convex in $\xi$, thus ensuring the sequential lower semicontinuity of the functional  whenever $p_j$ strongly converges in $L^1(B)$ and $w_j$ weakly converges in $L^1(B,\mathbb{R}^d)$. 
This implies that 
\[
\begin{split}
\int_B|\nabla u_0^M|^{{\bar p}(y)}\,\mathrm{d}y&\le \liminf_{h\to+\infty}\int_B |\nabla {\bar u}_h^M|^{p_h(y)}\,\mathrm{d}y\le \liminf_{h\to+\infty}\int_B|\nabla {\bar u}_h|^{p_h(y)}\,\mathrm{d}y,
\end{split}
\]
and $\nabla u_0^M$ is equibounded in $L^{{\bar p}(\cdot)}(B,\mathbb{R}^d)$. Therefore, letting $M\to+\infty$, 
we obtain that $u_0\in W^{1,{\bar p}(\cdot)}(B)$ and {\eqref{eq:claims}$(iii)$ follows.} 
Thanks to \eqref{boundpoin},  since $p^+<(p^-)^*$, we get that   $|\bar{u}_h|^{p^+}$ is equiintegrable, hence $\bar{u}_h$ strongly converges to $u_0$ in $L^{p^+}(B)$. {Now, \eqref{eq:claims}$(i)$ follows from Lemma \ref{embedding}, while \eqref{eq:claims}$(ii)$ and $(iv)$ can be inferred from \eqref{(12)} and \eqref{eq:ebounded}.} 

{\bf Step~2.} Here we assume $p^-\ge d$. In this case, the proof goes exactly as in {\bf Step~1} using \eqref{maggd} of Theorem  \ref{thm:poincsbv} instead of \eqref{(11)}.

\end{proof}

\begin{remark}\label{pcost}
{We observe that the previous assumptions \eqref{eq:assump} and \eqref{eq:assump2} are always satisfied} if the sequence $p_h(\cdot)$ converges uniformly to a constant function $\bar p$. 
In fact, if ${\bar p}\le d$, then we can find $\eta>0$ such that, for $h$ large, 
\[
{\bar p}-\eta<p_h(\cdot)<{\bar p}+\eta\ \hbox{ and }\ {\bar p}+\eta<({\bar p}-\eta)^*.
\]
If ${\bar p}>d$, then we can find $\eta>0$ such that, for $h$ large, 
\[
d<{\bar p}-\eta<p_h(\cdot)<{\bar p}+\eta,
\]
and these are the two cases  in the previous theorem.
\end{remark}

\subsection{Lusin approximation in $SBV^{p(\cdot)}$} \label{sec:lusinsbpx}

Let $\mu$ be a positive, finite Radon measure in $\R^d$. The \emph{maximal function} is defined as
\begin{equation*}
M(\mu)(x):= \sup_{\varrho>0} \frac{\mu(B_\varrho(x))}{\mathcal{L}^d(B_\varrho)}\,,\quad x\in \R^d\,.
\end{equation*}
As a consequence of the Besicovitch covering theorem (see, e.g., \cite{EG}), it can be shown that
\begin{equation}
\mathcal{L}^d(\{x\in\R^d:\,\, M(\mu)(x)>\lambda\}) \leq \frac{c}{\lambda} \mu(\R^d)
\label{eq:2.3dclv}
\end{equation}
for a constant $c$ depending only on $d$. 

We recall the Lipschitz truncation result for $SBV^{p(\cdot)}$ functions, proved in \cite[Theorem~3.1]{DCLV} (see also \cite{DMS}), which we state here in a slightly modified version suitable for our purposes.
 
\begin{theorem}\label{thm:dclvLip}
Let $\Omega\subset\R^d$ be an open bounded set with Lipschitz boundary, and let $p:\Omega\to(1,+\infty)$ be log-H\"older continuous on $\Omega$ and $1<p^-\le p(x)\le p^+<\infty$, for every $x\in\Omega$. Let $\{v_h\}_{h\in\N}\subset SBV^{p(\cdot)}(\Omega)$ be a sequence of functions with compact support in $\Omega$, such that
\begin{equation*} 
\|v_h\|_1\to0\,,\quad \sup_{h\in\N}\|v_h\|_\infty <+\infty\,, \quad \sup_{h\in\N}\int_\Omega |\nabla v_h|^{p(x)}\,\mathrm{d}x <+\infty \,.
\end{equation*}
Let $\{\theta_h\}_{h\in\N}$ be a sequence of strictly positive numbers such that $\theta_h\to0$ and $\frac{\|v_h\|_{L^{p(\cdot)}(\Omega)}}{\theta_h}\to0$. Then for every $h$ there exist sequences $\mu_j, \lambda_{h,j}>1$ such that for every $h,j\in\N$ 
\begin{equation*}
2^j\le\mu_j \leq \lambda_{h,j} \leq \mu_{j+1}\,,
\end{equation*}
and there exists a sequence $\{v_{h,j}\} \subset W^{1,\infty}(\Omega)$ such that for every $h,j\in\N$
\begin{equation*}
\|v_{h,j}\|_\infty \leq \theta_h\,,\quad \|\nabla v_{h,j}\|_\infty \leq C \lambda_{h,j}
\end{equation*}
for some constant $C$ depending on $d,p^-,p^+$, the $\log$-H\"older constant of $p(\cdot)$, and also on $v_h$ in terms of $\|v_h\|_{W^{1,p(\cdot)}(\Omega)}$. Moreover, up to a null set, 
\begin{equation*}
\{v_{h,j}\neq v_h\} \subset \{M(|v_h|)>\theta_h\} \cup \{M(|Dv_h|)>3K\lambda_{h,j}\} 
\end{equation*} 
for some constant $K$ depending on $d,p^-,p^+$, the $\log$-H\"older constant of $p(\cdot)$, and also on $v_h$ in terms of $\|v_h\|_{W^{1,p(\cdot)}(\Omega)}$. For every $j\in \N$
\begin{equation*}
\nabla v_{h,j} \rightharpoonup^* 0 \quad \mbox{ in $L^\infty(\Omega;\R^{d})$ as $h\to+\infty$. }
\end{equation*}
Finally, there exists a sequence $\varepsilon_j>0$ with $\varepsilon_j\to0$ such that for every $h,j\in\N$,
\begin{equation*}
\begin{split}
& \|\nabla v_{h,j}\chi_{\{M(|v_h|)>\theta_h\} \cup \{M(|\nabla v_h|)>2K\lambda_{h,j}\} } \|_{L^{p(\cdot)}(\Omega)} \\
& \leq C \|\lambda_{h,j}\chi_{\{M(|v_h|)>\theta_h\} \cup \{M(|\nabla v_h|)>2K\lambda_{h,j}\} } \|_{L^{p(\cdot)}(\Omega)} \leq C \frac{\|v_h\|_{L^{p(\cdot)}(\Omega)}}{\theta_h} \mu_{j+1} + \varepsilon_j \,.
\end{split}
\end{equation*}
\end{theorem}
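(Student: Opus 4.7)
The plan is to follow the construction in \cite[Theorem~3.1]{DCLV}, keeping track of the role of the parameters $\theta_h$, $\mu_j$, $\lambda_{h,j}$ and verifying the quantitative modifications stated. The starting point is the classical Lipschitz-truncation-via-maximal-function scheme for $BV$/$SBV$ functions (see also \cite{DMS}): for each $h$ and each threshold $\lambda>1$, introduce the bad set
\[
B_{h,\lambda}:=\{M(|v_h|)>\theta_h\}\cup\{M(|Dv_h|)>3K\lambda\},
\]
where $M(|Dv_h|)$ denotes the maximal function of the total variation measure $|Dv_h|$, $K$ is a fixed dimensional constant, and $G_{h,\lambda}:=\Omega\setminus B_{h,\lambda}$. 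A pointwise Campanato-type estimate at Lebesgue points of $G_{h,\lambda}$ shows that $v_h$ coincides $\mathcal{L}^d$-a.e.\ on $G_{h,\lambda}$ with a $C\lambda$-Lipschitz function bounded in absolute value by $\theta_h$; this extends to all of $\Omega$ by the McShane formula and, after an outer truncation at levels $\pm\theta_h$, produces the candidate $v_{h,j}\in W^{1,\infty}(\Omega)$ once $\lambda=\lambda_{h,j}$ has been fixed.

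The second step is the choice of $\lambda_{h,j}$ in a dyadic scale. Setting $\mu_j=2^j$, a pigeonhole/averaging argument over the logarithmic slice $\lambda\in[\mu_j,\mu_{j+1}]$ selects $\lambda_{h,j}\in[\mu_j,\mu_{j+1}]$ so that the annular set $\{2K\lambda_{h,j}<M(|Dv_h|)\le 3K\lambda_{h,j}\}$ has small Lebesgue measure, quantitatively controlled via the weak-$(1,1)$ bound \eqref{eq:2.3dclv} applied to $|Dv_h|$ together with the uniform bound $\sup_h\int_\Omega|\nabla v_h|^{p(x)}\,\mathrm{d}x<+\infty$. To transfer this Lebesgue estimate into the required $L^{p(\cdot)}$-modular bound, log-H\"older continuity is used to freeze the exponent on small balls, as in \cite[Lemma~3.2]{Dieni}, thus yielding modular estimates uniform across the variable growth. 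The contribution of $\{M(|v_h|)>\theta_h\}$ is handled in parallel through the same weak-$(1,1)$ inequality and the hypothesis $\|v_h\|_{L^{p(\cdot)}(\Omega)}/\theta_h\to 0$. This produces the stated inequality with a slack term $\varepsilon_j\to 0$ encoding the dyadic averaging.

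The final ingredient is the weak-$\ast$ convergence $\nabla v_{h,j}\wstar 0$ in $L^\infty(\Omega;\R^d)$ for each fixed $j$ as $h\to+\infty$. For fixed $j$, the Lipschitz constants $C\lambda_{h,j}\le C\mu_{j+1}$ are uniform in $h$, so $\{v_{h,j}\}_h$ is bounded in $W^{1,\infty}(\Omega)$ with supports in a fixed compact subset of $\Omega$. Since $v_{h,j}=v_h$ on $G_{h,\lambda_{h,j}}$ while $|v_{h,j}|\le\theta_h\to 0$ on $B_{h,\lambda_{h,j}}$, the hypothesis $\|v_h\|_1\to 0$ forces $v_{h,j}\to 0$ in $L^1(\Omega)$. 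Combined with the uniform $W^{1,\infty}$ bound and Ascoli--Arzel\`a, this yields uniform convergence $v_{h,j}\to 0$ along a subsequence, from which the claimed weak-$\ast$ convergence of the gradients follows by integration by parts against $C_c^1$ test fields.

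The main technical obstacle is the second step: converting the dyadic Lebesgue-measure estimates on the super-level sets of $M(|Dv_h|)$ and $M(|v_h|)$ into the sharp modular bound for $\|\lambda_{h,j}\chi_{B_{h,\lambda_{h,j}}}\|_{L^{p(\cdot)}(\Omega)}$ stated in the theorem. The log-H\"older freezing mechanism is indispensable, but it must be combined carefully with the norm--modular inequalities \eqref{eq:normineq}--\eqref{eq:normineq2}, with the dependence of the constants $C$, $K$ on $\|v_h\|_{W^{1,p(\cdot)}(\Omega)}$ (which is uniformly bounded by hypothesis), and with the specific dyadic gap $\mu_{j+1}/\mu_j=2$, which just barely provides enough room for the averaging to yield the stated $\varepsilon_j\to 0$ correction.
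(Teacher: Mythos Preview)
The paper does not contain a proof of this statement: Theorem~\ref{thm:dclvLip} is explicitly \emph{recalled} from \cite[Theorem~3.1]{DCLV} (with a pointer also to \cite{DMS}), in a slightly modified form adapted to the authors' later use, and no argument is supplied here. So there is no ``paper's own proof'' to compare your proposal against.

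That said, your sketch is a faithful outline of the construction in \cite{DCLV,DMS}: the maximal-function good/bad set decomposition, the McShane extension with outer truncation at $\pm\theta_h$, the dyadic pigeonhole selection of $\lambda_{h,j}$, the log-H\"older freezing to pass from Lebesgue-measure bounds to $L^{p(\cdot)}$-modular bounds, and the weak-$*$ compactness argument for the gradients. One small inaccuracy: you fix $\mu_j=2^j$, whereas the stated conclusion only guarantees $2^j\le\mu_j\le\mu_{j+1}$; in the original argument the $\mu_j$ themselves arise from a first pigeonhole step and are not prescribed a priori. Apart from this, your identification of the ``main technical obstacle'' (converting weak-$(1,1)$ measure control into the sharp $L^{p(\cdot)}$ norm bound via log-H\"older freezing and the norm--modular inequalities) is exactly the crux of \cite[Theorem~3.1]{DCLV}.
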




\subsection{Free-discontinuity functionals with $p(\cdot)$-growth}\label{sec:freepx}
In this paragraph we consider integral functionals of the form 
\begin{equation}\label{funct}
F(u,c,A) :=\int_Af(x,\nabla u)+c{\mathcal H}^{d-1}(S_u\cap A),
\end{equation}
defined on $SBV_{\rm loc}(\Omega)$, {where $c>0$ and $A\subset\Omega$ is an open set.} The Carath\'eodory function $f:\Omega\times\mathbb{R}^d\to\mathbb{R}$ will be supposed to satisfy the following growth condition:
\begin{equation}\label{cresci}
L^{-1}|\xi|^{p(x)}\le f(x,\xi)\le L(1+|\xi|^{p(x)}),
\end{equation}
for any $\xi\in\mathbb{R}^d$, a.e. $x\in\Omega$, {where $L \ge 1$ and the variable exponent $p:\Omega\to(1,+\infty)$ is 
a bounded function. } We will write $F(u,A)$ for $F(u,1,A)$.

{We recall the classical definition of \emph{deviation from minimality} (see, e.g., \cite{Ambrosio-Fusco-Pallara:2000}), which gives an estimate of how far is $u$ from being a minimizer of $F$ in $\Omega$.}

\begin{definition}[Deviation from minimality]\label{deviazione}
The deviation from minimality  of a function $u\in SBV_{\rm loc}(\Omega)$ satisfying $F(u,c,A)<+\infty$ for all open sets $A\ssubset\Omega$ is defined as the smallest $\lambda\in [0,+\infty]$ such that
\[
F(u,c,A)\le F(v,c,A) + \lambda
\]
for all $v\in SBV_{\rm loc}(\Omega)$ satisfying $\{v\neq u\} \ssubset A \ssubset \Omega$. It is denoted by ${\rm Dev}(u,c,\Omega)$.

\end{definition}

If ${\rm Dev}(u,c,\Omega)=0$ we say that $u$ is a \emph{local minimizer} of $F(u,c,\Omega)$ in $\Omega$.

%

The following lemma compares the energy of a function $u$ in a ball $B_{\rho'}$ with the energy of the function $v\chi_{B_\rho} + u\chi_{B_{\rho'}\setminus B_\rho}$ , where $\rho<\rho'$. The proof is a straightforward adaptation of \cite[Lemma 7.3]{Ambrosio-Fusco-Pallara:2000}.

\begin{lemma}\label{lemma:7.3}
Let $\Omega \subset \R^d$ be open and bounded and let  $B_R\ssubset\Omega$, $\rho<\rho'<R$.  For every functional $F$  of the type \eqref{funct} with $f$  satisfying \eqref{cresci}, and for every $u, v\in SBV_{\rm loc}(B_R)$ such that $F(u,c,B_{\rho'})<+\infty$, $F(v,c,B_{\rho'})<+\infty$, and ${\mathcal H}^{d-1}(S_v\cap \partial B_\rho)=0$, then
\begin{equation}\label{eq: 7.3}
\begin{split}
&F ( u,c, B_\rho) \le  F(v,c,B_\rho)  + c\mathcal{H}^{d-1}(\{{\tilde v}\neq{\tilde u}\}\cap\partial B_\rho) +{\rm Dev}(u,c,B_{\rho'})\,,\\
&{\rm Dev}(v,c, B_\rho)\le F(v,c,B_\rho)-F(u,c,B\rho)+c\mathcal{H}^{d-1}(\{{\tilde v}\neq{\tilde u}\}\cap\partial B_\rho) +{\rm Dev}(u,c,B_{\rho'})\,.
\end{split}
\end{equation}
\end{lemma}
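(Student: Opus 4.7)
The natural strategy is to test the definition of ${\rm Dev}(u,c,B_{\rho'})$ with the glued competitor
\[
w := v\,\chi_{B_\rho} + u\,\chi_{B_{\rho'}\setminus B_\rho},
\]
which satisfies $\{w\neq u\}\subset B_\rho \ssubset B_{\rho'} \ssubset \Omega$ and is therefore admissible. This immediately yields
\[
F(u,c,B_{\rho'}) \le F(w,c,B_{\rho'}) + {\rm Dev}(u,c,B_{\rho'}).
\]

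To estimate $F(w,c,B_{\rho'})$ I would first split the bulk part as $\int_{B_\rho} f(x,\nabla v)\,\mathrm{d}x + \int_{B_{\rho'}\setminus B_\rho}f(x,\nabla u)\,\mathrm{d}x$, since $w$ coincides with $v$ inside $B_\rho$ and with $u$ outside. For the surface part, the standard slicing decomposition gives
\[
S_w \cap B_{\rho'} \subset \bigl(S_v \cap B_\rho\bigr) \cup \bigl(S_u \cap (B_{\rho'}\setminus \overline{B_\rho})\bigr) \cup \bigl\{x\in \partial B_\rho : \, \mathrm{tr}^-_{B_\rho}v(x) \neq \mathrm{tr}^+_{B_\rho}u(x)\bigr\},
\]
where the last term accounts for new jumps created on $\partial B_\rho$. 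Using the hypothesis $\mathcal{H}^{d-1}(S_v\cap\partial B_\rho)=0$ one identifies the inner trace of $v$ on $\partial B_\rho$ with $\tilde v$ up to an $\mathcal{H}^{d-1}$-null set, while the outer trace of $u$ coincides with $\tilde u$ wherever the latter is defined (again, up to an $\mathcal{H}^{d-1}$-null set). After subtracting the common contribution $\int_{B_{\rho'}\setminus B_\rho}f(x,\nabla u)\,\mathrm{d}x + c\,\mathcal{H}^{d-1}(S_u\cap (B_{\rho'}\setminus B_\rho))$ from both sides, the first inequality in \eqref{eq: 7.3} follows.

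For the second inequality I would fix any admissible competitor $z\in SBV_{\rm loc}(B_R)$ with $\{z\neq v\}\ssubset B_\rho$ and apply the same construction with $z$ in place of $v$, namely $\hat w := z\,\chi_{B_\rho} + u\,\chi_{B_{\rho'}\setminus B_\rho}$. Since $z\equiv v$ in a neighbourhood of $\partial B_\rho$, the precise representatives agree on $\partial B_\rho$ $\mathcal{H}^{d-1}$-a.e., so the boundary correction is unchanged. The first part of the argument then produces
\[
F(u,c,B_\rho) \le F(z,c,B_\rho) + c\,\mathcal{H}^{d-1}(\{\tilde v\neq \tilde u\}\cap \partial B_\rho) + {\rm Dev}(u,c,B_{\rho'}),
\]
whence
\[
F(v,c,B_\rho) - F(z,c,B_\rho) \le F(v,c,B_\rho) - F(u,c,B_\rho) + c\,\mathcal{H}^{d-1}(\{\tilde v\neq \tilde u\}\cap \partial B_\rho) + {\rm Dev}(u,c,B_{\rho'}),
\]
and taking the supremum over all such $z$ gives the desired bound on ${\rm Dev}(v,c,B_\rho)$.

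The only delicate point is the clean reduction of the new jump set of $w$ on $\partial B_\rho$ to $\{\tilde v\neq \tilde u\}$: this requires matching the one-sided traces of $v$ and $u$ on the smooth surface $\partial B_\rho$ with their precise representatives, which is exactly where the assumption $\mathcal{H}^{d-1}(S_v\cap\partial B_\rho)=0$ enters. Everything else is routine bookkeeping parallel to \cite[Lemma~7.3]{Ambrosio-Fusco-Pallara:2000}, and no special feature of the variable exponent $p(\cdot)$ beyond the generic growth \eqref{cresci} is needed here.
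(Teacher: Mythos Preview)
Your proposal is correct and follows precisely the approach of \cite[Lemma~7.3]{Ambrosio-Fusco-Pallara:2000}, to which the paper simply defers without giving its own proof. The glued competitor $w=v\chi_{B_\rho}+u\chi_{B_{\rho'}\setminus B_\rho}$, the splitting of bulk and surface contributions, and the second step with an arbitrary $z$ satisfying $\{z\neq v\}\ssubset B_\rho$ are exactly the ingredients of that argument, and your handling of the trace identification on $\partial B_\rho$ via the hypothesis $\mathcal{H}^{d-1}(S_v\cap\partial B_\rho)=0$ is the standard one.
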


\section{Asymptotic behavior of almost minimizers with small jump sets} \label{sec:smalljump}
This section is devoted to the proof of an auxiliary $\Gamma$-convergence type result for sequences of functionals with variable growth. We will namely consider a sequence of continuous functions $f_h:B_R\times\mathbb{R}^d\to [0,+\infty)$ satisfying
\begin{equation}\label{crescih}
L^{-1}|\xi|^{p_h(x)}\le f_h(x,\xi)\le L(1+|\xi|^{p_h(x)})\,,
\end{equation}
for any $\xi\in\mathbb{R}^d$, $x\in\Omega$, $p_h:\Omega\to(1,+\infty)$ {uniformly log-H\"older continuous satisfying \eqref{eq:loghold}} 
and \eqref{eq:assump}, and $L\ge 1$.
If $v\in SBV_{\rm loc}(\Omega)$, $c>0$, we set 
\[
F_h(v,c, B_\rho):=\int_{B_\rho}f_h(x,\nabla v)\,\mathrm{d}x+c{\mathcal H}^{d-1}(S_v\cap B_\rho)\,,
\]
and we write ${\rm Dev}_h(v, c, B_\rho)$ for the corresponding deviation from minimality. 

The next theorem describes the behavior of a sequence $u_h$ of ``almost minimizers'' of functionals $F_h$ when  the functions $f_h$ converge  to $f$ uniformly on compact sets 
and the measures of the discontinuity sets $S_{u_h}$ are infinitesimal. The proof follows the lines of the proof of Theorem 2.6 in \cite{FMT}, taking advantage of a lower semicontinuity result proved in \cite[Theorem 4.1]{DCLV} in the $SBV$ setting of variable growth, here slightly modified.

\begin{theorem}\label{gammaconv}
Let $f_h:B_R\times\mathbb{R}^d\to [0,+\infty)$ be a sequence of continuous functions, convex with respect to the second variable, satisfying \eqref{crescih}, with $p_h$ converging uniformly to some ${\bar p}\in (1,+\infty)$ in $B_R$. Let $u_h\in SBV(B_R)$, $m_h:={\rm med}(u_h,B_R)$, $c_h\in (0,+\infty)$. Assume that
\begin{equation}\label{hp:1}
\sup_h F_h(u_h,c_h,B_R)<+\infty\,,
\end{equation}
\begin{equation}\label{hp:2}
\lim_{h\to+\infty}{\mathcal H}^{d-1}(S_{u_h})=0\,,
\end{equation}
\begin{equation}\label{hp:3}
\lim_{h\to+\infty}{\rm Dev}_h(u_h,c_h,B_R)=0\,,
\end{equation}
\begin{equation}\label{hp:4}
\lim_{h\to+\infty}u_h-m_h=u_0\in W^{1,{\bar p}}(B_R) \  \hbox{ $\mathcal{L}^d$-a.e. in }B_R. 
\end{equation}
Then, if $f_h$ converges to $f:\mathbb{R}^d\to [0,+\infty)$, with $L^{-1}|\xi|^{\bar p}\le f(\xi)\le L|\xi|^{\bar p}$, uniformly on compact subsets of $B_R\times\mathbb{R}^d$, the function $u_0$ is a local minimizer of the functional $v\to\int_{B_R}f(\nabla v)\,\mathrm{d}x$ in $W^{1,{\bar p}}(B_R)$ and
\begin{equation}
\lim_{h\to+\infty}F_h(u_h,c_h, B_\rho)=\int_{B_\rho}f(\nabla u_0)\,\mathrm{d}x\,, \ \ \ \forall \rho\in(0,R)\,.
\end{equation}
\end{theorem}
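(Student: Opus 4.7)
The plan is to proceed in two steps: first, to establish a lower semicontinuity (liminf) inequality for the bulk part of the energy; second, to combine it with the almost-minimality hypothesis \eqref{hp:3} through suitable competitors to produce a matching upper bound. Together, these yield both the local minimality of $u_0$ for $v\mapsto\int f(\nabla v)\,\mathrm{d}x$ and the convergence of the full energies $F_h(u_h,c_h,B_\rho)$; the surface contribution $c_h\mathcal{H}^{d-1}(S_{u_h}\cap B_\rho)$ then vanishes as a byproduct of the squeezing.

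\textbf{Step 1 (Liminf).} I would adapt the scheme of Step~1 in the proof of Theorem~\ref{thm:3.5} (based on \cite[Theorem~4.1]{DCLV}). By \eqref{hp:2} and \eqref{(12)}, the truncation $T_{B_R}u_h - m_h$ differs from $u_h - m_h$ on a set of vanishing measure, so it still converges $\mathcal{L}^d$-a.e.\ to $u_0$ thanks to \eqref{hp:4} and is uniformly bounded in $L^\infty(B_R)$. After multiplication by a cut-off supported in $B_R$, Theorem~\ref{thm:dclvLip} provides Lipschitz surrogates $w_{h,j}$ agreeing with $T_{B_R}u_h - m_h$ outside exceptional sets whose gradient contribution in $L^{p_h(\cdot)}$ is asymptotically negligible. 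On the good set these are Sobolev functions and $p_h\to\bar p$ uniformly, so De~Giorgi's lower semicontinuity theorem -- applied first to the continuous convex auxiliary integrand $(p,\xi)\mapsto|\xi|^{p\vee 1}$ as in Theorem~\ref{thm:3.5} and then transferred to $f_h$ via uniform convergence on compact sets and the growth \eqref{crescih} -- yields
\[
\int_{B_\rho} f(\nabla u_0)\,\mathrm{d}x \le \liminf_{h\to\infty}\int_{B_\rho} f_h(x,\nabla u_h)\,\mathrm{d}x\,,\qquad\rho\in(0,R)\,.
\]
A standard diagonal extraction handles the joint dependence on the two approximation parameters.

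\textbf{Step 2 (Minimality and energy convergence).} For any $v\in W^{1,\bar p}(B_R)$ with $\{v\neq u_0\}\subset\subset B_R$, choose radii $\rho_0<\rho_1<R$ such that $\{v\neq u_0\}\subset\subset B_{\rho_0}$ and a smooth cut-off $\phi\in C^\infty_c(B_{\rho_1})$ with $\phi\equiv 1$ on $B_{\rho_0}$, and test the deviation against $\tilde v_h:=\phi(v+m_h)+(1-\phi)u_h$. This function belongs to $SBV(B_R)$, coincides with $u_h$ outside $B_{\rho_1}$ (so $\{\tilde v_h\neq u_h\}\subset\subset B_R$), creates no new jump on $\partial B_{\rho_0}$ by continuity of $\phi$, and satisfies $S_{\tilde v_h}\subset S_{u_h}\cap(B_R\setminus B_{\rho_0})$. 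By \eqref{hp:3} and Definition~\ref{deviazione}, $F_h(u_h,c_h,B_{\rho_1})\le F_h(\tilde v_h,c_h,B_{\rho_1})+\mathrm{Dev}_h(u_h,c_h,B_R)$. The bulk contribution on $B_{\rho_0}$ equals $\int_{B_{\rho_0}}f_h(x,\nabla v)\,\mathrm{d}x$ and converges to $\int_{B_{\rho_0}}f(\nabla v)\,\mathrm{d}x$ by uniform convergence of $f_h$ on compacts and $\nabla v\in L^{\bar p}$. In the transition annulus $B_{\rho_1}\setminus B_{\rho_0}$, where $v=u_0$, the error term $\nabla\phi(u_0+m_h-u_h)$ tends to $0$ in $L^{p_h(\cdot)}$ by \eqref{hp:4} together with truncation and dominated convergence, while the remaining mass of $u_h$ on the annulus is controlled by a Fubini-type selection of $\rho_0,\rho_1$ along which $\int_{B_{\rho_1}\setminus B_{\rho_0}}f_h(x,\nabla u_h)\,\mathrm{d}x$ is small. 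Letting $\rho_0,\rho_1\to\rho$ and combining with Step~1 gives $\limsup_h F_h(u_h,c_h,B_\rho)\le\int_{B_\rho}f(\nabla v)\,\mathrm{d}x$. The arbitrariness of $v$ yields the local minimality of $u_0$; choosing $v=u_0$ in addition produces the energy convergence and, through the resulting squeezing, forces $c_h\mathcal{H}^{d-1}(S_{u_h}\cap B_\rho)\to 0$.

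\textbf{Main obstacle.} The hardest part will be the liminf inequality in Step~1, where the Lusin truncation parameters $\theta_h$ and $\lambda_{h,j}$ must be calibrated so that the exceptional sets have asymptotically negligible $L^{p_h(\cdot)}$ gradient contribution uniformly as $p_h\to\bar p$, and the De~Giorgi lower semicontinuity must be carried through under non-constant, converging exponents. A secondary delicate point in Step~2 is the passage to the limit with the cut-off degenerating: one has to simultaneously control the $L^{p_h(\cdot)}$ size of the error term $\nabla\phi(u_0+m_h-u_h)$ in the shrinking annulus and the bulk contribution of $u_h$ thereon.
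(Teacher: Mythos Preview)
Your two-step strategy (liminf via Lipschitz truncation and De~Giorgi, then limsup via cut-off competitors exploiting \eqref{hp:3}) is exactly the paper's, and your Step~1 is essentially correct --- though note that Theorem~\ref{thm:dclvLip} must be applied to the \emph{difference} $v_h:=\bar u_h-u_0$ (after a further $L^\infty$ truncation at level $M$), not to $\bar u_h:=T_{B_R}u_h-m_h$ itself, since it requires $\|v_h\|_1\to 0$.

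Step~2, however, has a genuine gap. Your competitor $\tilde v_h=\phi(v+m_h)+(1-\phi)u_h$ is glued to $u_h$, so the transition error is $\nabla\phi\,(u_0-(u_h-m_h))$. You claim this tends to $0$ in $L^{p_h(\cdot)}$ ``by \eqref{hp:4} together with truncation and dominated convergence'', but \eqref{hp:4} gives only a.e.\ convergence, and there is no dominating function: on the exceptional set $\{u_h\neq T_{B_R}u_h\}$ (vanishing measure, but nonempty) the values $|u_h-m_h|$ carry no $L^{p_h}$ bound. What \emph{does} go to $0$ in $L^{p_h(\cdot)}$ is $\bar u_h-u_0$ (Theorem~\ref{thm:3.5}(i)). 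The paper therefore builds the competitor from $\bar u_h$, namely $\zeta_h=\varphi v^\varepsilon+(1-\varphi)\bar u_h$; but then $\{\zeta_h\neq u_h\}$ is not compactly contained in $B_R$, so one cannot test ${\rm Dev}_h(u_h,\cdot)$ directly. The missing intermediate step --- the paper's Step~2 (Asymptotics) --- is to transfer almost-minimality from $u_h$ to $\bar u_h$ via Lemma~\ref{lemma:7.3}, at the price of an extra surface term $c_h\mathcal H^{d-1}(\{\tilde{\bar u}_h\neq\tilde u_h\}\cap\partial B_\rho)$. Showing this vanishes for a.e.\ $\rho$ requires the coarea formula, \eqref{(12)}, and a case distinction according to whether $\lim_h c_h$ is finite or $+\infty$.

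A secondary gap: uniform convergence of $f_h\to f$ on compact sets together with $\nabla v\in L^{\bar p}$ does \emph{not} yield $\int_{B_{\rho_0}}f_h(x,\nabla v)\to\int_{B_{\rho_0}}f(\nabla v)$, since on $\{|\nabla v|>M\}$ only the growth bound $f_h(x,\xi)\le L(1+|\xi|^{p_h(x)})$ is available and $p_h$ may exceed $\bar p$. The paper first regularises $v$ to $v^\varepsilon\in W^{1,\infty}$, passes to the limit in $h$, and only then sends $\varepsilon\to 0$ using $f(\xi)\le L|\xi|^{\bar p}$.
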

\begin{proof}
Replacing  $u_h$ with $u_h-m_h$, we may always assume that $m_h =0$ for all $h$. {By virtue of assumptions \eqref{hp:1} and \eqref{hp:2} we may appeal to Theorem \ref{thm:3.5}  (see also Remark \ref{pcost}). }
Thanks to this and to an inspection of its proof, we have that $u_0\in W^{1,{\bar p}}(B_R)$, 
 $\nabla{\bar u}_h^M\rightharpoonup\nabla u_0^M$ in $L^1$, 
 \begin{equation}\label{convubar}
 {\mathcal L}^d(B_R\cap\{u_h\neq{\bar u}_h\})\to 0\,,\ \ \hbox{ and }\  \lim_{h\to+\infty}\int_B |{\bar u}_h-u_0|^{p_h(y)}\,\mathrm{d}y=0\,,
 \end{equation}
where we still used the notation ${\bar u}_h:=T_{B_R}u_h$.
We divide the proof into three steps.

{\bf Step 1: a lower semicontinuity result.} We claim that for a.e. $\rho\in (0,R)$, it holds
\begin{equation}\label{lsc1}
\int_{B_\rho} f(\nabla u_0)\,\mathrm{d}y\le \liminf_{h\to+\infty}\int_{B_\rho}f_h(y,\nabla {\bar u}_h)\,\mathrm{d}y\,,
\end{equation}
which in turn, taking into account \eqref{hp:2}, implies 
\begin{equation}\label{semiFh}
\int_{B_\rho} f(\nabla u_0)\,\mathrm{d}y\le \liminf_{h\to+\infty} F_h({\bar u}_h,c_h,B_\rho)\,.
\end{equation} 
If we show the lower semicontinuity inequality \eqref{lsc1} for $\nabla {\bar u}_h^M \rightharpoonup \nabla u_0^M$ weakly in $L^1$, the claim will be proved, since $f_h(y,0)\to f(0)=0$ uniformly on $B_\rho$. So, replacing ${\bar u}_h$ with  ${\bar u}_h^M$ we can suppose that ${\bar u}_h$ is bounded uniformly with respect to $h$. 
Let us denote $v_h:={\bar u}_h-u_0$, so that $v_h\to 0$ in $L^q(B_R)$, for every $q\ge 1$.  Now we adapt the proof of \cite[Theorem 4.1]{DCLV} to our setting. Even if only a few changes are significant, we will write it for the sake of completeness. 

Possibly multiplying $v_h$ by a cut-off function $\zeta\in C^\infty(\mathbb{R}^d)$, with compact support in $B_R$ and identically equal to $1$ in $B_\rho$, we can assume, without loss of generality, that the functions $v_h\in SBV(\mathbb{R}^d)$ have compact support in $B_R$. Fix $\eta>0$ such that $p_\eta:={\bar p}-\eta>1$ and $\|p_h-{\bar p}\|_{L^\infty(B_R)}<\eta$, for $h$ large enough, so that 
\[
\sup_h\int_{B_R}|\nabla v_h|^{p_\eta}\,\mathrm{d}y\le \sup_h\left[\int_{B_R}|\nabla v_h|^{p(x)}+\mathcal{L}^{d}(B_R)|\right]<+\infty\,, 
\]
\[
\ \ \ \gamma_h :=\|v_h\|_{L^{p_\eta}(B_R)}\to 0\,,\ \ \ \  \sup_h\|v_h\|_{L^\infty(B_R)}<+\infty\,.
\]
So we can apply 
Theorem~\ref{thm:dclvLip} to $v_h$: given $\theta_h$, with $\gamma_h/\theta_h\to 0$, there exist  sequences $\varepsilon_j\in (0,1), \mu_j,\lambda_{h,j}>1$, with $\varepsilon_j\to 0$, $\mu_j\to+\infty$,  and $\mu_j\le \lambda_{h,j}\le \mu_{j+1}$,  and there exists a sequence $v_{h,j}\in W^{1,\infty}(\mathbb{R}^d)$ such that $v_{h,j}=0$ in $\mathbb{R}^d\setminus B_R$, $\|v_{h,j}\|_\infty\le \theta_h$, $\|\nabla v_{h,j}\|_\infty\le C\lambda_{h,j}$, $\{v_{h,j}\neq v_h\}\subset B_R\cap\{|M(|v_h|)>\theta_h\}\cup\{M(|Dv_h|)>3K\lambda_{h,j}\}$, and 
\begin{equation}\label{misure}
\lambda_{h,j}^{p_\eta}{\mathcal L}^d(B_R\cap\{|M(|v_h|)>\theta_h\}\cup\{M(|\nabla v_h|)>2K\lambda_{h,j}\})\le C\left[\frac{\gamma_h}{\theta_h}\mu_{j+1}+\varepsilon_j\right]^{p_\eta},
\end{equation}
with $K$ and $C$ two constants independent of $h,j,\eta$. Moreover, for every $j\in\mathbb{N}$ and for $h\to\infty$,
$$
\nabla v_{h,j}\rightharpoonup 0 \ \ \ \hbox{ weakly$^*$ in } L^\infty(B_R,\mathbb{R}^d).
$$
For any $r>0$, we have 
\[
\int_{B_\rho} f_h(y,\nabla {\bar u}_h)\,\mathrm{d}y=\int_{B_\rho}f_h(y,\nabla u_0+\nabla v_h)\,\mathrm{d}y\ge\int_{E_r\cap\{v_h=v_{h,j}\}}f_h(y,\nabla u_0+\nabla v_{h,j})\,\mathrm{d}y,
\]
where $E_r=\{y\in B_\rho : |\nabla u_0|\le r\}$. Recalling the convergence of $f_h$ to $f$, we deduce that
\[
\begin{split}
\int_{B_\rho}& f_h(y,\nabla {\bar u}_h)\,\mathrm{d}y\ge \int_{E_r\cap\{v_h=v_{h,j}\}}\left[f_h(y,\nabla u_0+\nabla v_{h,j})-f(\nabla u_0+\nabla v_{h,j})\right]\,\mathrm{d}y\\
&+\int_{E_r\cap\{v_h=v_{h,j}\}}f(\nabla u_0+\nabla v_{h,j})\,\mathrm{d}y=\int_{E_r\cap\{v_h=v_{h,j}\}}\left[f_h(y,\nabla u_0+\nabla v_{h,j})-f(\nabla u_0+\nabla v_{h,j})\right]\,\mathrm{d}y\\
&+\int_{E_r}f(\nabla u_0+\nabla v_{h,j})\,\mathrm{d}y-\int_{E_r\cap\{v_h\neq v_{h,j}\}}f(\nabla u_0+\nabla v_{h,j})\,\mathrm{d}y\,,
\end{split}
\]
obtaining, when $h\to+\infty$,
\[
\begin{split}
\liminf_{h\to+\infty}\int_{B_\rho} f_h(y,\nabla {\bar u}_h)\,\mathrm{d}y&\ge \liminf_{h\to+\infty}\int_{E_r}f(\nabla u_0+\nabla v_{h,j})\,\mathrm{d}y\\
&-\limsup_{h\to+\infty}\int_{E_r\cap\{v_h\neq v_{h,j}\}}f(\nabla u_0+\nabla v_{h,j})\,\mathrm{d}y\\
&\ge \int_{E_r}f(\nabla u_0)\,\mathrm{d}y-L\limsup_{h\to+\infty}\int_{E_r\cap\{v_h\neq v_{h,j}\}}|\nabla u_0+\nabla v_{h,j}|^{\bar p}\,\mathrm{d}y\,,
\end{split}
\]
where the De Giorgi's semicontinuity Theorem (\cite{DeG}) and the growth assumption on $f$ have been applied for the last inequality. We now treat the last term:
\[
\begin{split}
&\int_{E_r\cap\{v_h\neq v_{h,j}\}}|\nabla u_0+\nabla v_{h,j}|^{\bar p}\,\mathrm{d}y\le c\left(r^{\bar p}+\lambda_{h,j}^{\bar p}\right){\mathcal L}^d(\{y\in B_R: v_h\neq v_{h,j}\})\\
&\le c\left(r^{\bar p}+\lambda_{h,j}^{\bar p}\right)\left[{\mathcal L}^d(B_R\cap\{M(|v_h|>\theta_h\}\cup\{M(|\nabla v_h|)>2K\lambda_{h,j}\})\right.\\
&\qquad\qquad\qquad\qquad\left.+{\mathcal L}^d(B_R\cap \{M(|D^sv_h|)>K\lambda_{h,j}\})\right].
\end{split}
\]
{Recalling \eqref{eq:2.3dclv}, } 
we have 
\[
c\left(r^{\bar p}+\lambda_{h,j}^{\bar p}\right){\mathcal L}^d(B_R\cap \{M(|D^sv_h|)>K\lambda_{h,j}\})\le c\left(r^{\bar p}+\lambda_{h,j}^{\bar p}\right)\frac{1}{\lambda_{h,j}}{\mathcal H}^{d-1}(S_{v_h}),
\]
which is infinitesimal as $h\to+\infty$, since $\mu_j\le \lambda_{h,j}\le \mu_{j+1}$. Thus we are left to estimate
\[
{\mathcal I}_{h,j} :=c\left(r^{\bar p}+\lambda_{h,j}^{\bar p}\right){\mathcal L}^d(B_R\cap\{M(|v_h|>\theta_h\}\cup\{M(|\nabla v_h|)>2K\lambda_{h,j}\}),
\]
for which we can use \eqref{misure}, so that
\[
\begin{split}
{\mathcal I}_{h,j}\le c\frac{r^{\bar p}+\lambda_{h,j}^{\bar p}}{\lambda_{h,j}^{p_\eta}}\left[\frac{\gamma_h}{\theta_h}\mu_{j+1}+\varepsilon_j\right]^{p_\eta}\le c\frac{r^{\bar p}}{\mu_j^{p_\eta}}\left[\frac{\gamma_h}{\theta_h}\mu_{j+1}+\varepsilon_j\right]^{p_\eta}+c\mu_{j+1}^{\eta}\left[\frac{\gamma_h}{\theta_h}\mu_{j+1}+\varepsilon_j\right]^{p_\eta},
\end{split}
\]
whence
\[
\limsup_{j\to+\infty}\lim_{\eta\to 0}\limsup_{h\to+\infty}{\mathcal I}_{h,j}=0.
\]
In conclusion, we obtained
\[
\liminf_{h\to+\infty}\int_{B\rho} f_h(y,\nabla {\bar u}_h)\,\mathrm{d}y\ge  \int_{E_r}f(\nabla u_0)\,\mathrm{d}y,
\]
and letting $r$ tend to $+\infty$, we proved the lower semicontinuity result \eqref{lsc1}.

{\bf Step 2: asymptotics.} Now we integrate  ${\mathcal H}^{d-1}(\{{\tilde{\bar u}}_h\neq\tilde{u}_h\}\cap\partial B_\rho)$ with respect to $\rho$, and using coarea formula and \eqref{(12)}, we obtain
\[
\begin{split}
a_h:=c_h\int_0^R{\mathcal H}^{d-1}(\{{\tilde{\bar u}}_h\neq\tilde{u}_h\}\cap\partial B_\rho)\,\mathrm{d}\rho&=c_h{\mathcal L}^d(B_R\cap \{{\bar u}_h\neq u_h\})\\
&\le  2 c_h\left(2\gamma_{\rm iso}\mathcal{H}^{d-1}(S_{u_h}\cap B_R)\right)^{\frac{d}{d-1}}.
\end{split}
\]
 We will prove that $\displaystyle{\lim_{h\to+\infty}  a_h=0}$.
{We need to distinguish two cases, according to the value of the limit $c_\infty:=\lim_hc_h$, which exists up to a subsequence (not relabeled). If $c_\infty<+\infty$, the assertion immediately follows from \eqref{hp:2}. If $c_\infty=+\infty$, from \eqref{hp:1} we have
\[
c_h\left(2\gamma_{\rm iso}\mathcal{H}^{d-1}(S_{u_h}\cap B_R)\right)^{\frac{d}{d-1}}\le c_h\left(2\gamma_{\rm iso}\frac{M}{c_h}\right)^{\frac{d}{d-1}}\le \left(\frac{1}{c_h}\right)^{\frac{1}{d-1}}\left(2\gamma_{\rm iso}{M}\right)^{\frac{d}{d-1}}
\]
and again the thesis follows.}

Then, up to a subsequence, we may assume that, for almost every $\rho\in (0,R)$,
\begin{equation}\label{limit}
\lim_{h\to+\infty}c_h\,{\mathcal H}^{d-1}(\{{\tilde{\bar u}}_h\neq\tilde{u}_h\}\cap\partial B_\rho)=0\,.
\end{equation}
Since for any $h$ and for ${\mathcal L}^1$ -a.e. $\rho\in (0,R)$, ${\mathcal H} ^{d-1}(S_{{\bar u}_h}\cap\partial B_\rho)=0$, we can apply Lemma \ref{lemma:7.3}, which gives
\begin{equation}\label{eq:7.3h}
F_h( u_h,c_h, B_\rho) \le  F_h({\bar u}_h,c_h,B_\rho)  + c_h\mathcal{H}^{d-1}(\{{\tilde{\bar u}}_h\neq{\tilde u_h}\}\cap\partial B_\rho) +{\rm Dev}_h(u_h,c_h,B_R)\,,
\end{equation}
and
\begin{equation}\label{eq:7.4h}
\begin{split}
{\rm Dev}_h({\bar u}_h,c_h, B_\rho)&\le F_h({\bar u}_h,c,B_\rho)-F_h(u_h,c_h,B_\rho)+c_h\mathcal{H}^{d-1}(\{{\tilde{\bar u}}_h\neq{\tilde u_h}\}\cap\partial B_\rho)\\
&+{\rm Dev}_h(u_h,c_h,B_{R})\,.
\end{split}
\end{equation}
Moreover, from the growth assumption \eqref{crescih} on $f_h$, taking into account that ${\bar u}_h$ is a truncation of $u_h$, we also have 
\begin{equation}\label{eq:7.5h}
F_h({\bar u}_h,c_h,B_\rho)\le F_h(u_h,c_h,B_\rho)+L\,{\mathcal L}^d(B_\rho\cap\{{\bar u}_h\neq u_h\})\,.
\end{equation}
Thus, if we set for all $\rho<R$,
\[
\alpha(\rho):=\lim_{h\to+\infty}F_h(u_h,c_h,B_\rho)\,,
\]
which exists since the function $\rho\mapsto F_h(u_h,c_h,B_\rho)$ is increasing and equibounded, thanks to \eqref{eq:7.3h}, \eqref{eq:7.5h}, \eqref{limit}, \eqref{hp:3}, and \eqref{convubar}, we may conclude that for ${\mathcal L}^1$-a.e. $\rho\in(0,R)$,
\begin{equation}\label{anche}
\alpha(\rho)=\lim_{h\to+\infty}F_h({\bar u}_h,c_h,B_\rho)\,.
\end{equation}
From this and \eqref{eq:7.4h} we also have that
\[
\lim_{h\to+\infty}{\rm Dev}_h({\bar u}_h,c_h,B_\rho)=0\,.
\]
Now we observe that the sequence of Radon measures 
\[
\mu_h=|\nabla {\bar u}_h|^{p_h(y)}{\mathcal L}^d+c_h{\mathcal H}^{d-1}\lvert{S_{{\bar u}_h}}
\]
is equibounded in mass in view of \eqref{hp:1}, so it weak$^*$ converges (up to a subsequence) to some Radon measure $\mu$ on $B_R$. 

{\bf Step 3: conclusion.} To get the final result, let  $v\in W^{1,\bar p}(B_R)$ be such that $\{v\neq u_0\}\ssubset B_R$. We also consider a regularized function $v^\varepsilon\in W^{1,\infty}(B_R)$ of $v$, strongly converging to $v$ in $W^{1,{\bar p}}(B_R)$. Let $\rho<\rho'\in (0, R)$, with $\rho'$ such that \eqref{anche} holds, $\mu(\partial B_{\rho'})=\mu(\partial B_\rho)=0$ and $\{v\neq u_0\}\ssubset B_{\rho}$. Let $\varphi\in C^\infty_c(B_{\rho'})$ be such that $\varphi = 1$ on $B_\rho$ and define
$\zeta_h= \varphi v^\varepsilon+ (1-\varphi) {\bar u}_h$; since $\{\zeta_h\neq {\bar u}_h\}\ssubset B_{\rho'}$, straightforward computations lead to
\[
\begin{split}
F_h({\bar u}_h,c_h,B_{\rho'})&\le F_h(\zeta_h,c_h,B_{\rho'})+{\rm Dev}_h({\bar u}_h,c_h,B_{\rho'})\\
&\le F_h(v^\varepsilon,c_h,B_\rho)+c\left[\int_{B_{\rho'}\setminus B_\rho}\left(1+|\nabla v^\varepsilon|^{p_h(y)}+|\nabla {\bar u}_h|^{p_h(y)}+\frac{|v^\varepsilon-{\bar u}_h|^{p_h(y)}}{(\rho'-\rho)^{p_h(y)}}\right)\mathrm{d}y\right]\\
&+c_h{\mathcal H}^{d-1}(S_{{\bar u}_h}\cap B_{\rho'}\setminus{\overline B}_\rho) +{\rm Dev}_h({\bar u}_h,c_h,B_{\rho'})\,,
\end{split}
\]
for a suitable constant $c\ge 1$ depending only on $L$ and $p^+, p^-$. Letting $h\to+\infty$ and using \eqref{convubar}, we have
\[
\alpha(\rho')\le \int_{B_\rho}f(\nabla v^\varepsilon)\,\mathrm{d}y+c\left[\int_{B_{\rho'}\setminus B_\rho}\left(1+|\nabla v^\varepsilon|^{\bar p}+\frac{|v^\varepsilon-u_0|^{\bar p}}{(\rho'-\rho)^{\bar p}}\right)\mathrm{d}y\right]+c\,\mu(B_{\rho'}\setminus B_\rho)\,.
\]
Now we  let $\varepsilon\to0$ and recalling that $v=u_0$ outside $B_\rho$, we easily obtain 
\[
\alpha(\rho)\le \alpha(\rho')\le \int_{B_\rho}f(\nabla v)\,\mathrm{d}y+c\left[\int_{B_{\rho'}\setminus B_\rho}\left(1+|\nabla v|^{\bar p}\right)\mathrm{d}y\right]+c\,\mu(B_{\rho'}\setminus B_\rho)\,.
\]
Therefore, letting $\rho'$ tend to $\rho$ we finally get that for ${\mathcal L}^1$ -a.e. $\rho$ and any $v\in W^{1,{\bar p}} (B_R)$ such that $\{v\neq u_0\}\ssubset B_\rho$ we have 
\[
\limsup_{h\to+\infty}F_h({\bar u}_h,c_h,B_\rho)=\limsup_{h\to+\infty}F_h({u}_h,c_h,B_\rho)\le \int_{B_\rho}f(\nabla v)\,\mathrm{d}y\,.
\]
Choosing $v=u_0$ in the previous inequality and taking into account \eqref{semiFh}, we get that
\[
\lim_{h\to+\infty}F_h({u}_h,c_h,B_\rho)= \int_{B_\rho}f(\nabla u_0)\,\mathrm{d}y
\]
and that $u_0$ has the claimed minimizing property. This concludes the proof.

\end{proof} 
\section{Strong minimizers of free-discontinuity functionals with $p(\cdot)$-growth}  \label{sec:strongmin}

\subsection{Assumptions on the energy}\label{sec:mainassump}
Consider a variable exponent $p\colon \Omega \to (1, +\infty)$ satisfying
\begin{equation}\label{eq:assump+}
1<p^-\le p(x)\le p^+<+\infty,\ \ \ \ \ \ \forall x\in B\,.
\end{equation}
For $K\subset\R^d$ be a closed set and $u\in W^{1,p(\cdot)}(\Omega\setminus K)$ we define the functional
\begin{equation}
\mathcal{G}(K,u):= \int_{\Omega\setminus K} f(x,\nabla u)\,\mathrm{d}x + \alpha\int_{\Omega\setminus K}|u-g|^q\,\mathrm{d}x +  \mathcal{H}^{d-1}(K\cap\Omega)
\label{eq:functionalsG}
\end{equation}
where $\alpha>0$, $q\geq1$, and $g\in L^\infty(\Omega)$. Accordingly, we will denote with $\mathcal F$ the weak formulation of $\mathcal{G}$, that is the free-discontinuity functional of the form
\begin{equation}
\mathcal{F}(u):= \int_\Omega f(x,\nabla u)\,\mathrm{d}x + \alpha\int_\Omega|u-g|^q\,\mathrm{d}x +  \mathcal{H}^{d-1}(S_u\cap\Omega)\,, 
\label{eq:functionals}
\end{equation}
defined for $u\in SBV^{p(\cdot)}(\Omega)$.

We assume that the function $f:\Omega\times\mathbb{R}^d\to[0,+\infty)$ has the form 
\begin{equation}\label{effe}
f(x,\xi)=|\xi|^{p(x)}+h(x,\xi),
\end{equation}
where $h$ is a continuous function, convex in $\xi$ and such that
\begin{equation}\label{ipo1h}
0\le  h(x,\xi)\le L(1+|\xi|^{p(x)}),
\end{equation}
for each $(x,\xi)\in\Omega\times\mathbb{R}^d$, and
\begin{equation}\label{ipo2h}
|h(x,\xi)-h(x_0,\xi)|\le L\,\omega(|x-x_0|)(|\xi|^{p(x)}+|\xi|^{p(x_0)})(1+|\log|\xi||)
\end{equation}
for any $\xi\in\mathbb{R}^d$, $x,x_0\in\Omega$ and where $L\ge 1$. Here $\omega:[0,+\infty)\to[0,+\infty)$ is a nondecreasing continuous function such that $\omega(0)=0$, which represents the modulus of continuity of $p(\cdot)$. From now on, we will assume $\omega$ to satisfy \eqref{eq:rinfologhold}.

\subsection{A Decay lemma}\label{sec:decay}

{We start by proving a crucial decay property of the energy $F$ in small balls. The proof is  the variable exponent counterpart of the well-known argument, based on a blow-up procedure, devised for energies with $p$-growth (see, e.g., \cite[Lemma~7.14]{Ambrosio-Fusco-Pallara:2000}). The adaptation to the variable exponent setting is nontrivial, since one of the major ingredients - the homogeneity of the bulk densities - is missing. A first step in this direction, but still for a constant $p$, is provided by \cite[Lemma~3.3]{FMT} with the introduction of a inhomogeneous correction, namely a function $h$ as above. }

Before stating and proving our decay Lemma, we recall  a regularity result from \cite{AM2} for Sobolev minimizers of autonomous variational integrals (see Theorem 3.2 therein), which will be used in our proof. 

\begin{proposition}\label{prop:nondip}
Let $f:\mathbb{R}^d\to\mathbb{R}$ be a continuous function satisfying 
\begin{equation}\label{eq:boundsf}
L^{-1}(\mu^2+|\xi^2|)^{p/2}\le f(\xi)\le L(\mu^2+|\xi|^2)^{p/2}
\end{equation}
for all $\xi\in\mathbb{R}^d$, where $0\le\mu\le 1$, $L\ge 1$, $1<p^-\le p\le p^+$, and
\begin{equation}\label{eq:decomposition}
\begin{split}
&\int_{Q}f(\xi+\nabla\varphi)\mathrm{d}x\ge \int_{Q}[f(\xi)+L^{-1}(\mu^2+|\xi|^2+|\nabla \varphi|^2)^{(p-2)/2}|\nabla\varphi|^2]\,\mathrm{d}x,
\end{split}
\end{equation}
for all $\xi\in\mathbb{R}^d$, $\varphi\in C_0^1(Q)$, $Q$ be the unit cube. 
Let $B_R$ be a ball in $\mathbb{R}^d$ and let $v\in W^{1,p}(B_R)$ be a local minimizer of the functional $w\mapsto\int_{B_R}f(\nabla w)\,\mathrm{d}x$. Then there exists a constant $C_0=C_0(p^-,p^+,L)$ such that
\begin{equation}\label{Linfty}
\sup_{B_{R/2}}(\mu^2+|\nabla v|^2)^{p/2}\le C_0\-int_{B_R}(\mu^2+|\nabla v|^2)^{p/2}\mathrm{d}x\,.
\end{equation}
\end{proposition}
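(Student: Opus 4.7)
The plan is to follow the classical Moser-type iteration scheme that yields local Lipschitz regularity for minimizers of variational integrals with $p$-growth. The hypotheses \eqref{eq:boundsf}--\eqref{eq:decomposition} are precisely the quantitative $p$-ellipticity required for this scheme, and since $f$ is autonomous no delicate freezing of coefficients is needed.

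First, since no $C^2$ regularity of $f$ is assumed, I would pass to the regularized integrands $f_\varepsilon(\xi):=f(\xi)+\varepsilon(1+|\xi|^2)^{p/2}$ and consider the corresponding minimizers $v_\varepsilon$ with the same boundary values as $v$ on $\partial B_R$. Standard elliptic theory for uniformly strictly convex integrands yields $v_\varepsilon\in W^{2,2}_{\rm loc}(B_R)\cap C^{1,\alpha}_{\rm loc}(B_R)$, and the ellipticity bound \eqref{eq:decomposition} transfers to $f_\varepsilon$ with constants independent of $\varepsilon$. Differentiating the Euler-Lagrange equation of $f_\varepsilon$ along a coordinate direction and testing the linearized equation for $D_k v_\varepsilon$ against $\eta^2(\mu^2+|\nabla v_\varepsilon|^2)^\gamma D_k v_\varepsilon$, with $\eta$ a suitable cut-off between $B_\rho$ and $B_R$, produces, after summation in $k$ and use of the ellipticity, the Caccioppoli-type inequality
\[
\int_{B_\rho}\eta^2(\mu^2+|\nabla v_\varepsilon|^2)^{(p-2)/2+\gamma}|D^2 v_\varepsilon|^2\,\mathrm{d}x\le\frac{C(1+\gamma)^2}{(R-\rho)^2}\int_{B_R}(\mu^2+|\nabla v_\varepsilon|^2)^{p/2+\gamma}\,\mathrm{d}x,
\]
with $C=C(p^-,p^+,L)$, for every $\gamma\ge 0$.

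Applying the Sobolev inequality to $\eta(\mu^2+|\nabla v_\varepsilon|^2)^{(p+2\gamma)/4}$ and plugging in the above Caccioppoli estimate on the right-hand side yields a reverse H\"older--type gain of integrability $L^{p/2+\gamma}\to L^{(p/2+\gamma)\cdot d/(d-2)}$ of $(\mu^2+|\nabla v_\varepsilon|^2)$ on concentric smaller balls. Iterating along a geometric sequence $\rho_k\downarrow R/2$ and a matching sequence of exponents $\gamma_k$ that compound the integrability indices, one obtains the $L^\infty$-$L^p$ bound \eqref{Linfty} for $v_\varepsilon$, with a constant $C_0$ depending only on $p^-,p^+,L$; the convergence of the iteration reduces to a classical Moser summability computation.

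Finally, coercivity and strict convexity of the regularized problems imply $v_\varepsilon\rightharpoonup v$ weakly in $W^{1,p}(B_R)$ as $\varepsilon\to 0^+$, and Fatou's lemma applied to $(\mu^2+|\nabla v_\varepsilon|^2)^{p/2}$ together with the uniform-in-$\varepsilon$ bound transfers \eqref{Linfty} to the original minimizer $v$. The principal obstacle, and the reason one appeals to a specialized reference, is the degenerate/singular character of the equation: when $p<2$ and $\mu=0$ the weight $(\mu^2+|\nabla v|^2)^{(p-2)/2}$ is singular, while when $p>2$ it degenerates, so justifying the second-order computations and tracking constants through the iteration uniformly in the class of exponents between $p^-$ and $p^+$ is the delicate technical point.
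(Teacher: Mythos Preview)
The paper does not prove this proposition; it is quoted from \cite{AM2} (Theorem~3.2 there), so there is no ``paper's own proof'' to compare against beyond the reference. Your sketch follows the classical Moser iteration route for smooth $p$-elliptic integrands, which is indeed the ultimate engine behind the result, but there is a genuine gap in the way you set it up.

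The issue is the regularization step. You correctly observe that $f$ is only assumed continuous, but the perturbation $f_\varepsilon(\xi)=f(\xi)+\varepsilon(1+|\xi|^2)^{p/2}$ does not repair this: adding a smooth function to a merely continuous one leaves a merely continuous one. Hence the assertions ``standard elliptic theory \dots yields $v_\varepsilon\in W^{2,2}_{\rm loc}\cap C^{1,\alpha}_{\rm loc}$'' and ``differentiating the Euler--Lagrange equation of $f_\varepsilon$'' are not justified; the linearized equation for $D_k v_\varepsilon$ with coefficients $D^2 f_\varepsilon(\nabla v_\varepsilon)$ is not available. Condition~\eqref{eq:decomposition} is an \emph{integral} (uniform strict quasiconvexity) hypothesis, not a pointwise bound on $D^2 f$, and it does not by itself produce the second-order coefficients your Caccioppoli step needs.

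What \cite{AM2} actually does, and what the paper alludes to in Remark~\ref{rem:nondip}, is to exploit the structural decomposition of \cite{FFM}: under \eqref{eq:boundsf}--\eqref{eq:decomposition} one may write $f(\xi)=(\mu^2+|\xi|^2)^{p/2}+g(\xi)$ with $g$ quasiconvex and of $p$-growth. The Lipschitz estimate \eqref{Linfty} is then obtained by comparison with the minimizer of the model energy $\int(\mu^2+|\nabla w|^2)^{p/2}$ (for which the full smooth theory and Moser iteration are available), together with a higher-integrability/reverse-H\"older argument that controls the error coming from $g$. If you wish to salvage the direct Moser approach, you would need an additional mollification $f_{\varepsilon,\delta}=f_\varepsilon*\phi_\delta$ and a verification that \eqref{eq:decomposition} passes to $f_{\varepsilon,\delta}$ with constants uniform in $\delta$; this is doable but is precisely the technical point you have skipped.
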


\begin{remark}\label{rem:nondip}
Observe that the previous Theorem can be applied to
\[
f(\xi)=(\mu^2 + |\xi|^2)^{p/2}+h(\xi)
\] 
where $h$ is a convex function satisfying $0\le h(\xi) \le C(\mu^2+ |\xi|^2)^{p/2}$. Indeed, by \cite[Proposition 2.2]{FFM} a continuous function $f$ satisfying \eqref{eq:boundsf} admits the above decomposition  if and only if  \eqref{eq:decomposition} holds.
\end{remark}

The following result is a slight generalization of \cite[Lemma 3.2]{FMT} to sequences of functions.
\begin{lemma}\label{lem:lemma3.2fmt}
Let $(g_j)_{j\in\N}$, $g_j:\R^{d}\to[0,\infty)$, be a sequence of quasi-convex functions, and let $(p_j)_{j\in\N}$, with $p_j\geq1$ for every $j\in \N$, be a bounded sequence. Assume that 
\begin{equation*}
0\leq g_j(\xi) \leq L (1+|\xi|^{p_j})\,,\quad \forall \xi\in \R^{d}\,,\,\, \forall j\in\N\,,
\end{equation*}
and let $(t_j)\subset(0,\infty)$ be a sequence such that $\lim_j t_j=+\infty$. Then, setting
\begin{equation*}
\hat{g}_j(\xi):=\frac{g_{j}(t_{j}\xi)}{t_{j}^{p_{j}}}\,,\quad \xi\in\R^d\,,\,\, j\in\N\,,
\end{equation*}
there exists a subsequence $(t_{j_k})$ such that $\hat{g}_{j_k}$ converge to a  quasi-convex function $g_\infty$ uniformly on compact subsets of $\R^{d}$. If, in addition, $p_j\to p$ for some $p\geq1$, then
\begin{equation*}
g_\infty(\xi) \leq L |\xi|^p\,,\quad \forall \xi\in\R^d\,.
\end{equation*}
\end{lemma}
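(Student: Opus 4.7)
The plan rests on showing that the rescaled family $(\hat g_j)$ is locally equibounded and equi-Lipschitz, after which an Arzelà--Ascoli extraction produces a subsequence converging locally uniformly to a continuous $g_\infty$, with both quasi-convexity and the growth bound passing to the limit.

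The equi-boundedness is immediate: by the growth assumption on $g_j$,
\[
\hat g_j(\xi) \;=\; \frac{g_j(t_j\xi)}{t_j^{p_j}} \;\le\; L\bigl(t_j^{-p_j} + |\xi|^{p_j}\bigr),
\]
and since $(p_j)$ is bounded, say by $P\ge 1$, and $t_j\to+\infty$, one obtains a uniform local bound of the form $\hat g_j(\xi) \le L(1+|\xi|^{P})$ on bounded subsets of $\R^d$.

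The more delicate step is the equi-Lipschitz estimate. First, each $\hat g_j$ is quasi-convex: substituting $\varphi\mapsto t_j\varphi$ in the defining integral inequality for $g_j$ at the point $t_j\xi$ and then dividing by $t_j^{p_j}$ reproduces the same inequality for $\hat g_j$. Next, the standard Lipschitz estimate for quasi-convex functions with $p$-growth (see, e.g., Marcellini, or Acerbi--Fusco) gives
\[
|g_j(\eta_1)-g_j(\eta_2)| \;\le\; C\bigl(1+|\eta_1|^{p_j-1}+|\eta_2|^{p_j-1}\bigr)\,|\eta_1-\eta_2|
\]
with a constant $C=C(L,P)$ uniform in $j$ since $p_j\le P$. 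Rescaling via $\eta_i=t_j\xi_i$ yields
\[
|\hat g_j(\xi_1)-\hat g_j(\xi_2)| \;\le\; C\bigl(t_j^{1-p_j}+|\xi_1|^{p_j-1}+|\xi_2|^{p_j-1}\bigr)\,|\xi_1-\xi_2|.
\]
Since $p_j \ge 1$ and $t_j \ge 1$ eventually, the residual factor $t_j^{1-p_j}\le 1$, and therefore on every ball $\{|\xi|\le R\}$ the family $(\hat g_j)$ is equi-Lipschitz with a constant depending only on $L$, $P$, and $R$.

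With these two uniform estimates, Arzelà--Ascoli provides a subsequence $\hat g_{j_k}$ converging locally uniformly to a continuous $g_\infty\colon\R^d\to[0,+\infty)$. Quasi-convexity is stable under locally uniform convergence (one passes to the limit in the defining integral inequality by dominated convergence, since on any $\xi+\nabla\varphi(Q)$ the family is uniformly bounded), hence $g_\infty$ is quasi-convex. Finally, under the extra assumption $p_j\to p$, passing to the limit along $j_k$ in the pointwise bound $\hat g_{j_k}(\xi) \le L(t_{j_k}^{-p_{j_k}}+|\xi|^{p_{j_k}})$ gives $g_\infty(\xi) \le L|\xi|^p$. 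The main obstacle I anticipate is precisely the Lipschitz step: one must verify that the classical Lipschitz constant for quasi-convex functions with $p$-growth can be made to depend only on the upper bound $P$ of the exponents (not on the individual $p_j$) and that the rescaling factor $t_j^{1-p_j}$ stays uniformly bounded despite both $t_j$ and $p_j$ varying, a fact which reduces to the elementary observation $t_j^{1-p_j}\le 1$ whenever $t_j,p_j\ge 1$.
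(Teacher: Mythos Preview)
Your proof is correct and follows essentially the same approach as the paper: the paper also invokes the standard Lipschitz estimate for quasi-convex functions with $p$-growth (citing Giusti), with constant $C=C(L,\sup_j p_j)$, to obtain local equiboundedness and equicontinuity of $(\hat g_j)$, and then concludes by Arzel\`a--Ascoli. You have simply made explicit the rescaling step and the bound $t_j^{1-p_j}\le 1$, as well as the passage of quasi-convexity and the growth bound to the limit, which the paper leaves to the reader.
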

\begin{proof}
The assertion follows in a standard way noticing that each $g_j$ is continuous and complying with the estimate (see, e.g., \cite[Lemma~5.2]{GIUSTI})
\begin{equation*}
|g_j(\xi_1)-g_j(\xi_2)| \leq C (1+|\xi_1|^{p_j -1}+|\xi_2|^{p_j -1}) |\xi_1-\xi_2|\,,\quad \forall \xi_1,\xi_2\in\R^d\,,\,\,\forall j\in\N\,,
\end{equation*}
where $C=C(L,\sup_{j}p_j)$. Then, the sequence $\hat{g}_j$ is uniformly bounded and uniformly equicontinuous in any ball.  
\end{proof}
%

We are now in a position to state and prove the announced decay property.

\begin{lemma}[Decay estimate]\label{lem:decay}
Let $f$ be a function satisfying \eqref{effe}, \eqref{ipo1h}, \eqref{ipo2h}, for a strongly log-H\"older continous exponent $p(\cdot)$ complying with \eqref{eq:assump+}. There is a constant $C_1=C_1(d,L,p^-,p^+)$ with the property that for every $\tau\in (0,1)$ there exist $\varepsilon=\varepsilon(\tau)$, $\theta=\theta(\tau)$ in $(0,1)$ such that if $u\in SBV(\Omega)$ satisfies, for $x\in\Omega$ and $B_\rho(x)\ssubset\Omega$, $\rho<\varepsilon^2$,
\[
F(u,B_\rho(x))\le\varepsilon\rho^{d-1}, \ \ \ \ \ {\rm Dev}(u,B_\rho(x))\le\theta F(u,B_\rho(x)),
\]
then 
\begin{equation}\label{decade}
F(u,B_{\tau\rho}(x))\le C_1\tau^d F(u, B_\rho(x)).
\end{equation}
\end{lemma}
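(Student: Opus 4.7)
The plan is to argue by contradiction via a blow-up procedure, adapting the constant-exponent argument of \cite[Lemma 3.3]{FMT} (see also \cite[Lemma 7.14]{Ambrosio-Fusco-Pallara:2000}) to the variable-exponent setting. Suppose \eqref{decade} fails: then for some $\tau \in (0,1)$ and every $C_1 > 0$ there exist sequences $\varepsilon_h, \theta_h \downarrow 0$, $x_h \in \Omega$, $\rho_h < \varepsilon_h^2$, and $u_h \in SBV(\Omega)$ such that, setting $\eta_h := F(u_h, B_{\rho_h}(x_h))/\rho_h^{d-1} \in (0, \varepsilon_h]$,
\[
{\rm Dev}(u_h, B_{\rho_h}(x_h)) \le \theta_h\, \eta_h \rho_h^{d-1}, \qquad F(u_h, B_{\tau \rho_h}(x_h)) > C_1 \tau^d\, \eta_h \rho_h^{d-1}.
\]
After extracting a subsequence, $p_h := p(x_h) \to \bar p \in [p^-, p^+]$. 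The natural rescaling is $\mu_h := \eta_h^{1/p_h} \rho_h^{(p_h-1)/p_h}$, with which I set
\[
v_h(y) := \mu_h^{-1}\bigl[u_h(x_h + \rho_h y) - {\rm med}(u_h; B_{\rho_h}(x_h))\bigr], \quad y \in B_1,
\]
and the rescaled variable exponent $\tilde p_h(y) := p(x_h + \rho_h y)$, which converges uniformly to $\bar p$ on $\overline B_1$ by log-H\"older continuity.

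The next step is to identify the limit bulk integrand. Using \eqref{effe}, a direct change of variables produces the rescaled density
\[
\tilde f_h(y, \xi) = (\eta_h/\rho_h)^{(\tilde p_h(y)-p_h)/p_h}\, |\xi|^{\tilde p_h(y)} + \eta_h^{-1}\rho_h\, h\!\left(x_h + \rho_h y,\; (\mu_h/\rho_h)\,\xi\right).
\]
The strong log-H\"older condition \eqref{eq:rinfologhold}, combined with $\rho_h < \varepsilon_h^2$, will yield $\omega(\rho_h)\,|\log(\eta_h/\rho_h)| \to 0$; this forces the prefactor $(\eta_h/\rho_h)^{(\tilde p_h(y)-p_h)/p_h}$ to converge to $1$ uniformly on $B_1$, and hence the first summand to $|\xi|^{\bar p}$ uniformly on compact subsets of $B_1 \times \R^d$. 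For the inhomogeneous summand, the bounds \eqref{ipo1h}--\eqref{ipo2h} (whose logarithmic factor is exactly what the strong log-H\"older assumption absorbs) and Lemma \ref{lem:lemma3.2fmt} applied to $\xi \mapsto \eta_h^{-1}\rho_h\, h(x_h, (\mu_h/\rho_h)\xi)$ permit the extraction of a further subsequence along which it converges uniformly on compacta to a continuous convex $h_\infty$ with $0 \le h_\infty(\xi) \le L|\xi|^{\bar p}$. Hence $\tilde f_h \to f_\infty(\xi) := |\xi|^{\bar p} + h_\infty(\xi)$ uniformly on compact subsets of $B_1 \times \R^d$, with $f_\infty$ satisfying the two-sided $\bar p$-growth required by Theorem \ref{gammaconv}.

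By construction $\tilde F_h(v_h, B_1) = 1$, $\mathcal{H}^{d-1}(S_{v_h} \cap B_1) \le \eta_h \to 0$, and the deviation from minimality of $v_h$ for $\tilde F_h$ is at most $\theta_h \to 0$. Theorem \ref{thm:3.5} then supplies, along a subsequence, $v_0 \in W^{1,\bar p}(B_1)$ with $v_h - {\rm med}(v_h; B_1) \to v_0$ a.e.\ in $B_1$, so that all hypotheses of Theorem \ref{gammaconv} are met: $v_0$ is a local minimizer of $v \mapsto \int_{B_1} f_\infty(\nabla v)\,dy$ in $W^{1,\bar p}(B_1)$ and $\tilde F_h(v_h, B_\sigma) \to \int_{B_\sigma} f_\infty(\nabla v_0)\,dy$ for a.e.\ $\sigma \in (0,1)$. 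Since $f_\infty$ fulfills the hypotheses of Proposition \ref{prop:nondip} (cf.\ Remark \ref{rem:nondip}) with constant exponent $\bar p$ and $\mu = 0$, the sup-estimate gives $\sup_{B_{1/2}} |\nabla v_0|^{\bar p} \le C_0 \dashint_{B_1} f_\infty(\nabla v_0)\,dy \le C_0'$. Thus for $\tau \le 1/4$ (the case $\tau > 1/4$ being handled by monotonicity once $C_1$ is large enough),
\[
\int_{B_\tau} f_\infty(\nabla v_0)\,dy \le L\, \gamma_d \tau^d C_0' =: C_* \tau^d.
\]
Choosing $\sigma \in (\tau, 2\tau)$ at which the above convergence holds and unwinding the rescaling delivers $F(u_h, B_{\tau\rho_h}(x_h)) \le 2 C_*\, \tau^d \eta_h \rho_h^{d-1}$ for $h$ large, which contradicts the lower bound as soon as $C_1 > 2C_*$.

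The main obstacle is the asymptotic analysis of the rescaled bulk density: establishing the uniform convergence $(\eta_h/\rho_h)^{(\tilde p_h(y) - p_h)/p_h} \to 1$ is the precise point at which the \emph{strong} log-H\"older assumption \eqref{eq:rinfologhold}, rather than plain log-H\"older, becomes indispensable, and is also what forces the matching logarithmic factor in \eqref{ipo2h}, ensuring that $h$ admits a well-defined blow-up limit. The remaining ingredients---$SBV^{p_h(\cdot)}$ compactness and the $\Gamma$-liminf inequality for the rescaled energies---are exactly what Theorem \ref{thm:3.5} and Theorem \ref{gammaconv} are designed to deliver.
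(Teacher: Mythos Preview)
Your overall strategy coincides with the paper's: a blow-up contradiction argument feeding into Theorem~\ref{thm:3.5}, Theorem~\ref{gammaconv} and Proposition~\ref{prop:nondip}. The gap lies in the normalization. You rescale by the \emph{actual} energy level $\eta_h := F(u_h,B_{\rho_h}(x_h))/\rho_h^{d-1}$, so that $\mu_h/\rho_h=(\eta_h/\rho_h)^{1/p_h}$. The paper instead rescales by the \emph{assumed upper bound} $\varepsilon_h$, setting $\gamma_h:=1/\varepsilon_h$ and $v_h(y)=(\gamma_h\rho_h)^{1/\bar p_h}\rho_h^{-1}u_h(x_h+\rho_h y)$. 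This difference is substantive, because the contradiction hypotheses provide no lower bound on $\eta_h$: one only knows $0<\eta_h\le\varepsilon_h$ and $\rho_h<\varepsilon_h^2$, so $\eta_h/\rho_h$ may lie anywhere in $(0,\infty)$.

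Two of your steps then break. First, your application of Lemma~\ref{lem:lemma3.2fmt} requires $t_h=\mu_h/\rho_h=(\eta_h/\rho_h)^{1/p_h}\to+\infty$, which fails for instance if $\eta_h=\rho_h$; and if along a subsequence $\eta_h/\rho_h\to 0$, then $\tilde f_h(y,0)=\eta_h^{-1}\rho_h\,h(x_h+\rho_h y,0)$ may diverge (nothing in \eqref{ipo1h} forces $h(\cdot,0)\equiv 0$), so $\tilde f_h$ cannot converge uniformly on compacta and Theorem~\ref{gammaconv} is inapplicable. Second, your claim that $\omega(\rho_h)\,|\log(\eta_h/\rho_h)|\to 0$ follows from \eqref{eq:rinfologhold} and $\rho_h<\varepsilon_h^2$ is unjustified when $\eta_h\ll\rho_h$: strong log-H\"older only controls $\omega(\rho_h)\log(1/\rho_h)$, not $\omega(\rho_h)\log(\rho_h/\eta_h)$. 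With the paper's scaling these obstructions disappear because the hypothesis $\rho_h<\varepsilon_h^2$ is precisely designed to force $\gamma_h\rho_h=\rho_h/\varepsilon_h\in(\rho_h,\varepsilon_h]$: then $t_h=(\varepsilon_h/\rho_h)^{1/\bar p_h}>\varepsilon_h^{-1/\bar p_h}\to\infty$, and $\omega(\rho_h)\log(\varepsilon_h/\rho_h)\le\omega(\rho_h/\varepsilon_h)\log(\varepsilon_h/\rho_h)\to 0$ by \eqref{eq:rinfologhold}. Replacing $\eta_h$ by $\varepsilon_h$ throughout (so that $\tilde F_h(v_h,\gamma_h,B_1)\le 1$ rather than $=1$), your outline becomes essentially the paper's proof; note also that Theorem~\ref{gammaconv} gives convergence for \emph{every} $\rho\in(0,1)$, so no auxiliary $\sigma\in(\tau,2\tau)$ is needed in the final step.
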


\begin{proof}
It is enough to assume $\tau\in (0,1/2)$ (otherwise just take $C_1=2^d$). {We argue by contradiction and assume that \eqref{decade} does not hold. }
In this case, there exist a sequence $u_j\in SBV(\Omega)$, sequences of nonnegative numbers $\varepsilon_j$, $\theta_j$, $\rho_j$, with $\lim_j \varepsilon_j=\lim_j\theta_j=0$, $\rho_j\le \varepsilon_j^2$, and $x_j\in\Omega$, with $B_{\rho_j}(x_j)\ssubset\Omega$, such that
\begin{equation}\label{assu1}
F(u_j,B_{\rho_j}(x_j))\le\varepsilon_j\rho_j^{d-1}\,, \ \ \ \ \ {\rm Dev}(u_j,B_{\rho_j}(x_j))\le\theta_j F(u_j,B_{\rho_j}(x_j))\,,
\end{equation}
and 
\begin{equation}\label{assu2}
F(u_j,B_{\tau\rho_j}(x_j))> C_1\tau^d F(u_j, B_{\rho_j}(x_j))\,,
\end{equation}
where $C_1=(1+L)C_0$ and $C_0$ comes from  \eqref{Linfty} in Proposition \ref{prop:nondip}. 

For every $j$, we consider the exponent $\bar{p}_j:=p(x_j)$, the scaled variable exponent $p_j: B_1\to(1,+\infty)$ and the function $v_j:B_1\to\mathbb{R}$ defined as
\begin{equation*}
p_j(y):=p(x_j+\rho_jy)
\end{equation*}
and
\[
v_j(y):=(\rho_j\gamma_j)^{1/\bar{p}_j}u_j(x_j+\rho_jy)\rho_j^{-1}\,,
\]
respectively, with $\gamma_j:=1/\varepsilon_j$. 
Now, if we set 
\[
F_j(v,\gamma_j,B_\rho):=\int_{B_\rho}f_j(x,\nabla v)\,\mathrm{d}y+\gamma_j\mathcal{H}^{d-1}(S_v,B_\rho)
\]
with $f_j(y,\xi):=\gamma_j\rho_jf(x_j+\rho_jy,(\gamma_j\rho_j)^{-1/\bar{p}_j}\xi)$, \eqref{assu1} and \eqref{assu2} can be rewritten respectively as
\begin{equation}\label{assv1}
F_j(v_j,\gamma_j,B_1)\le 1\,, \ \ \ \ \ {\rm Dev}_j(v_j,\gamma_j,B_1)\le\theta_j\,,
\end{equation}
and 
\begin{equation}\label{assv2}
F_j(v_j,\gamma_j,B_\tau)> C_1\tau^d F_j(v_j,\gamma_j, B_1)\,.
\end{equation}
The first bound in \eqref{assv1} in turn implies 
\[
\mathcal{H}^{d-1}(S_{v_j}\cap B_1)\le\varepsilon_j\,,
\]
and
\[
\int_{B_1}|\nabla v_j|^{p_j(y)}\,\mathrm{d}y\le \gamma_j\rho_j\int_{B_1}|\nabla u_j(x_j+\rho_jy)|^{p_j(y)}\,\mathrm{d}y\le 1\,,
\]
where we used that $\gamma_j\rho_j\le 1$.

Extracting eventually a further subsequence (not relabeled for convenience), we may also assume that $x_j\to x_0$, 
with $x_0\in\Omega$, so that, setting ${\bar p}:=p(x_0)$, we have
\begin{equation*}
\sup_{y\in B_1}|p_j(y)-\bar{p}| \leq \sup_{y\in B_1}|p(x_j+\rho_jy)-p(x_j)| + |p(x_j)- p(x_0)| \leq \omega(\rho_j) + o(1) \to0
\end{equation*}
as $j\to+\infty$. Thus, $p_j(y)\to {\bar p}$ uniformly 
in $B_1$. 
Taking into account Theorem \ref{thm:3.5} and Remark \ref{pcost}, we find a function $v_0\in W^{1,{\bar p}}(B_1)$ such that the convergences stated in \eqref{eq:claims} hold.

We now prove that the function $f_j(y, \xi)$ converges to a convex function $f_\infty(\xi)$ 
uniformly on compact subsets of $B_1\times\mathbb{R}^d$. Moreover, $|\xi|^{\bar p}\le f_\infty(\xi)\le L(1+|\xi|^{\bar p})$. 
{Setting
\begin{equation}
{\tilde f}_j(\xi):= \gamma_j\rho_jf(x_j, (\gamma_j\rho_j)^{-1/\bar{p}_j}\xi)\,,
\label{eq:scaledftilde}
\end{equation}
we start by proving that, if $y\in B_1$, $|\xi|\le R$, 
\[
|f_j(y,\xi)-{\tilde f}_j(\xi)|\le \omega_{j,R}
\]
for some 
$\omega_{j,R}$ which is infinitesimal as $j\to+\infty$. } In fact, we observe that
\[
\begin{split}
|f_j(y,\xi)-{\tilde f}_j(\xi)|&\le \gamma_j\rho_j\left|{|\xi|^{p_j(y)}}{(\gamma_j\rho_j)^{-\frac{p_j(y)}{\bar{p}_j}}}-{|\xi|^{\bar{p}_j}}{(\gamma_j\rho_j)^{-1}}\right|\\
& \,\,\,\,\,\, +\gamma_j\rho_j\left|h(x_j+\rho_jy,(\gamma_j\rho_j)^{-1/\bar{p}_j}\xi)-h(x_j, (\gamma_j\rho_j)^{-1/\bar{p}_j}\xi)\right| \\
& =: {\mathcal I}_j+ {\mathcal J}_j\,.
\end{split}
\]
We first estimate $\mathcal {I}_j$: by triangle inequality, we have
\[
{\mathcal I}_j\le \left(\frac{1}{\gamma_j\rho_j}\right)^{\frac{p_j(y)}{\bar{p}_j}-1}\left| |\xi|^{p_j(y)}-|\xi|^{\bar{ p}_j}\right|+|\xi|^{\bar{p}_j}\left|\left(\frac{1}{\gamma_j\rho_j}\right)^{\frac{p_j(y)}{\bar{p}_j}-1}-1\right|=:{\mathcal I}_{j,1}+{\mathcal I}_{j,2}\,.
\]
As for the coefficient of ${\mathcal I}_{j,1}$, we note that
\begin{equation*}\label{limi1}
\left|\left(\frac{1}{\gamma_j\rho_j}\right)^{\frac{p_j(y)-{\bar{p}_j}}{\bar{p}_j}}-1\right|\le \mathrm{e}^{\frac{|p_j(y)-{\bar{p}_j}|\log\left(\frac{1}{\gamma_j\rho_j}\right)}{\bar{p}_j}}-1\le 
\mathrm{e}^{-\frac{1}{p^-}\omega(\gamma_j\rho_j)\log(\gamma_j\rho_j)}-1\,,
\end{equation*}
whence
\begin{equation}
\lim_{j\to+\infty}\left(\frac{1}{\gamma_j\rho_j}\right)^{\frac{{p}_j(y)}{\bar{p}_j}-1}=1,
\label{limi1bis}
\end{equation}
since $\lim_j(\gamma_j\rho_j)=0$ and $\omega$ satisfies \eqref{eq:rinfologhold}. On the other hand it is easy to prove that $|\xi|^{p_j(y)}-|\xi|^{\bar{p}_j}$ converges to $0$ uniformly on $B_1$ if $|\xi|\le R$. We now treat ${\mathcal I}_{j,2}$: we have
\[
{\mathcal I}_{j,2}\le R^{\bar{p}_j}
\left|\left(\frac{1}{\gamma_j\rho_j}\right)^{\frac{p_j(y)-{\bar{p}_j}}{\bar{p}_j}}-1\right| \leq \max\{R^{p^-}, R^{p^+}\}\left|\left(\frac{1}{\gamma_j\rho_j}\right)^{\frac{p_j(y)-{\bar{p}_j}}{\bar{p}_j}}-1\right|\,,
\]
and the term in the right hand side is infinitesimal as $j\to+\infty$ by virtue of \eqref{limi1bis}. 
We are only left with estimating $\mathcal{J}_j$: 
by \eqref{ipo2h} we have 
\[
\begin{split}
\mathcal{J}_j&\le L\,\gamma_j\rho_j\,\omega(\rho_j)\left[|\xi|^{p_j(y)}(\gamma_j\rho_j)^{-\frac{p_j(y)}{\bar{p}_j}}+|\xi|^{\bar{p}_j}(\gamma_j\rho_j)^{-1}\right]\left[1+|\log(|\xi|(\gamma_j\rho_j)^{-\frac{1}{\bar{p}_j}})\right]\\
&\le c\,L\,\max\{R^{p^-}, R^{p^+}\}\log|R|\,\omega(\gamma_j\rho_j)\left[1+\log\left(\frac{1}{\gamma_j\rho_j}\right)\right]\left[\left(\frac{1}{\gamma_j\rho_j}\right)^{\frac{p_j(y)}{\bar{p}_j}-1}+1\right]\,,
\end{split}
\]
the last term being bounded again by \eqref{limi1bis}. 
Therefore,
\[
\mathcal{J}_j\le c\, \max\{R^{p^-}, R^{p^+}\}\log|R|\,\omega(\gamma_j\rho_j)\left[1+\log\left(\frac{1}{\gamma_j\rho_j}\right)\right]=:\omega_{j,R}.
\]
Now, thanks to {Lemma~\ref{lem:lemma3.2fmt}, applied to the sequences $p_j:=\bar{p}_j$, $g_j(\xi):=h(x_j,\xi)$ with $t_j:=(\gamma_j\rho_j)^{-1/\bar{p}_j}$, since ${\tilde f}_j(\xi)=|\xi|^{\bar{p}_j}+\hat{g}_j(\xi)$ } 
we may conclude that up to another subsequence 
\[
f_j(y,\xi)\to |\xi|^{\bar{p}}+g_\infty(\xi) 
\]
uniformly on $B_1\times K$, where $K$ is a compact subset in $\mathbb{R}^d$, for a convex function $g_\infty$ such that $0\le g_\infty(\xi)\le L|\xi|^{\bar p}$.  Now, set $f_\infty(\xi):=|\xi|^{\bar{p}}+g_\infty(\xi)$.  
Thus, Theorem \ref{gammaconv} allows us to conclude that $v_0$ is a local minimizer of the functional $v\mapsto\int_{B_1}f_\infty(\nabla v)\,\mathrm{d}x$ and 
\[
\lim_{j\to+\infty}F_j(v_j,\gamma_j,B_\rho)=\int_{B_\rho}f_\infty(\nabla v_0)\,\mathrm{d}x \ \ \ \ \forall \rho\in (0,1).
\]
Let us note that thanks to Proposition \ref{prop:nondip} and Remark \ref{rem:nondip},  $v_0$ satisfies
\[
\sup_{B_\tau}|\nabla v_0|^{\bar p}\le C_0\-int_{B_1}|\nabla v_0|^{\bar p}\,\mathrm{d}y\le C_0\-int_{B_1}f_\infty(\nabla v_0)\,\mathrm{d}y\,.
\]
In conclusion 
\[
\begin{split}
\lim_{j\to+\infty}F_j(v_j,\gamma_j,B_\tau)&=\int_{B_\tau}f_\infty(\nabla v_0)\,\mathrm{d}y\le (1+L)\sup_{B_\tau}|\nabla v_0|^{\bar p}\tau^d \mathcal{L}^d(B_1)\\
&\le (1+L)C_0\tau^d\int_{B_1}f_\infty(\nabla v_0)\,\mathrm{d}y\le (1+L)C_0\tau^d\limsup_{j\to+\infty}F_j(v_j,\gamma_j,B_1)\,,
\end{split}
\]
which provides the contradiction to \eqref{assv2}.

\end{proof}

\subsection{Density lower bound} \label{sec:densitylowerb}

In order to study the regularity of the jump set $S_u$, a key tool will be an Ahlfors-type regularity result, ensuring that $F(u,B_\rho(x))$, where $B_\rho(x)$ is any ball centred at a jump point $x\in S_u$, is controlled from above and from below. 

We first recall the definition of \emph{quasi-minimizer} (see \cite[Definition~7.17]{Ambrosio-Fusco-Pallara:2000}).

\begin{definition}
A function $u\in SBV_{\rm loc}(\Omega)$ is a quasi-minimizer of the functional $F(v,\Omega)$ if there exists a constant $\kappa\geq0$ such that for all balls $B_\rho(x)\ssubset\Omega$ it holds that
\begin{equation}
{\rm Dev}(u, B_\rho(x)) \leq \kappa \rho^d\,.
\label{eq:devquasim}
\end{equation}
{The class of quasi-minimizers complying with \eqref{eq:devquasim} is denoted by $\mathcal{M}_\kappa(\Omega)$.}
\end{definition}

{It is easy to check (see, e.g., \cite[Remark~7.16]{Ambrosio-Fusco-Pallara:2000}) that any minimizer $u$ of the functional $\mathcal{F}$  in \eqref{eq:functionals}  belongs to $\mathcal{M}_\kappa(\Omega)$ with $\kappa:= 2^q \alpha \gamma_d \|g\|_\infty^q$. }

The following upper bound is quite immediate, as it follows from a standard comparison argument. Note that here the assumption $x\in S_u$ is, actually, not needed.
\begin{lemma}[Energy upper bound] Assume that $f$ complies with \eqref{effe}, and {let $u\in \mathcal{M}_\kappa(\Omega)$.} 
Then, for all balls $B_\rho(x)\subset\Omega$
\begin{equation}
\int_{B_\rho(x)} f(y,\nabla u)\,\mathrm{d}y + \mathcal{H}^{d-1}(S_u\cap B_\rho(x)) \leq d\gamma_d\rho^{d-1} + {\kappa'\rho^d},
\label{eq:7.23AFP}
\end{equation}
where $\kappa'=\kappa+L\gamma_d$.
\end{lemma}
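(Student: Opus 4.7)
\emph{Proof plan.} The natural approach is a comparison argument based on the trivial competitor obtained by erasing $u$ inside the ball. Concretely, assuming without loss of generality that $\overline{B}_\rho(x)\ssubset \Omega$ (otherwise one works on $B_{\rho''}(x)$ with $\rho''\uparrow \rho$ and passes to the limit), I would fix $\rho'\in(\rho,\mathrm{dist}(x,\partial\Omega))$ and set $v:=u\,\chi_{\Omega\setminus B_\rho(x)}$. Then $v\in SBV_{\mathrm{loc}}(\Omega)$ with $\nabla v=\nabla u\,\chi_{\Omega\setminus \overline{B}_\rho(x)}$ and
\[
S_v\cap B_{\rho'}(x)\subseteq \bigl(S_u\cap(B_{\rho'}(x)\setminus \overline{B}_\rho(x))\bigr)\cup \partial B_\rho(x).
\]
Since $\{v\neq u\}\subseteq B_\rho(x)\ssubset B_{\rho'}(x)\ssubset\Omega$, the quasi-minimality condition \eqref{eq:devquasim} applied in $A=B_{\rho'}(x)$ yields
\[
F(u,B_{\rho'}(x))\le F(v,B_{\rho'}(x))+\kappa(\rho')^d.
\]

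The second step is to compute the right-hand side explicitly. By construction,
\[
\int_{B_{\rho'}(x)} f(y,\nabla v)\,\mathrm{d}y=\int_{B_{\rho'}(x)\setminus B_\rho(x)}f(y,\nabla u)\,\mathrm{d}y+\int_{B_\rho(x)}f(y,0)\,\mathrm{d}y,
\]
and since $f(y,0)=h(y,0)\le L$ by \eqref{effe} and \eqref{ipo1h}, the second integral is bounded by $L\gamma_d\rho^d$. For the surface term, the description of $S_v$ above together with $\mathcal{H}^{d-1}(\partial B_\rho(x))=d\gamma_d\rho^{d-1}$ gives
\[
\mathcal{H}^{d-1}(S_v\cap B_{\rho'}(x))\le \mathcal{H}^{d-1}\bigl(S_u\cap(B_{\rho'}(x)\setminus\overline{B}_\rho(x))\bigr)+d\gamma_d\rho^{d-1}.
\]

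Finally, subtracting the common annular contributions $\int_{B_{\rho'}\setminus\overline{B}_\rho}f(y,\nabla u)\,\mathrm{d}y$ and $\mathcal{H}^{d-1}(S_u\cap(B_{\rho'}\setminus\overline{B}_\rho))$ from both sides, and dropping the nonnegative leftover term $\mathcal{H}^{d-1}(S_u\cap\partial B_\rho(x))$ on the $u$-side, I obtain
\[
\int_{B_\rho(x)} f(y,\nabla u)\,\mathrm{d}y+\mathcal{H}^{d-1}(S_u\cap B_\rho(x))\le L\gamma_d\rho^d+d\gamma_d\rho^{d-1}+\kappa(\rho')^d.
\]
Letting $\rho'\to\rho^+$ produces precisely \eqref{eq:7.23AFP} with $\kappa'=\kappa+L\gamma_d$. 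The argument is essentially routine; the only mildly delicate point is the $SBV$ bookkeeping for $v$ near $\partial B_\rho(x)$, where the one-sided trace of $u$ from the exterior creates a new jump whose $\mathcal{H}^{d-1}$-measure is nevertheless majorized by that of the sphere itself.
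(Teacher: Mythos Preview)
Your proof is correct and follows exactly the standard comparison argument that the paper invokes by citing \cite[Lemma~7.19]{Ambrosio-Fusco-Pallara:2000}, with the only additional observation being that $f(y,0)=h(y,0)\le L$ forces the extra term $L\gamma_d\rho^d$ and hence $\kappa'=\kappa+L\gamma_d$. There is nothing to add.
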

\begin{proof}
See, e.g., \cite[Lemma~7.19]{Ambrosio-Fusco-Pallara:2000}, with $\kappa$ replaced by $\kappa'$ since $f(x,0)\le L$ in $B_\rho(x)$.
\end{proof}

On the contrary, the following lower bound for $F(u,B_\rho(x))$ requires that the small balls $B_\rho(x)$ be centred at $x\in {S_u}$. The proof is based on the decay estimate of Lemma~\ref{lem:decay}, and it follows along the lines of the proof of \cite[Theorem~7.21]{Ambrosio-Fusco-Pallara:2000} where $p$ is constant. The difference is in the introduction of the set $\Sigma$ where the $L^1_{\rm loc}$-function $|\nabla u|^{p(\cdot)}$ is locally large (see \eqref{eq:setsigma} below), which turns out to be $\mathcal{H}^{d-1}$-negligible. For the sake of completeness, we prefer to provide all the details. 

\begin{theorem}[Density lower bound]\label{thm:mainthm} Let $f$ be a function satisfying \eqref{effe}, \eqref{ipo1h}, \eqref{ipo2h}, for a strongly log-H\"older continous exponent $p$ complying with \eqref{eq:assump+}. There exist $\theta_0$ and $\rho_0$ depending on $d, p^-,p^+$ and $L$ with the property that if $u\in SBV(\Omega)$ is a {quasi-minimizer} of $F$ in $\Omega$, then 
\begin{equation}
F(u, B_\rho(x))>\theta_0\rho^{d-1}
\label{eq:7.24AFP}
\end{equation}
for all balls $B_\rho(x)\subset\Omega$ with centre $x\in\overline{S}_u$ and radius $\rho<\frac{\rho_0}{\kappa'}$, where $\kappa'$ is that of \eqref{eq:7.23AFP}. Moreover, 
\begin{equation}
\mathcal{H}^{d-1}((\overline{S}_u\setminus S_u)\cap \Omega)=0.
\label{eq:7.24AFPbis}
\end{equation}
\end{theorem}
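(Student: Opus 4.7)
I would follow the strategy of \cite[Theorem~7.21]{Ambrosio-Fusco-Pallara:2000}: argue by contradiction, iterate the decay Lemma~\ref{lem:decay}, and conclude via the singular-point criterion of Theorem~\ref{thm:thm7.8AFP}. The variable exponent is handled through the pointwise inequality $|\xi|^{p^-} \le 1 + |\xi|^{p(y)}$, which reduces the bulk bound to a constant-exponent one.

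First, I would establish the lower bound at points of $S_u$. Fix $\tau \in (0, 1/(2C_1))$, let $\varepsilon=\varepsilon(\tau)$, $\theta=\theta(\tau)$ be the constants of Lemma~\ref{lem:decay}, and set $\theta_0 := \varepsilon/2$; choose $\rho_0$ so small that $\rho < \rho_0/\kappa'$ entails $\kappa \rho \le \varepsilon \theta \tau^{d-1}/2$. Assuming by contradiction the existence of $x_0 \in S_u$ and $\rho < \rho_0/\kappa'$ with $F(u, B_\rho(x_0)) \le \theta_0 \rho^{d-1}$, I would set $r_k := \tau^k \rho$, $M_k := F(u, B_{r_k}(x_0))/r_k^{d-1}$, and run an inductive argument based on a dichotomy: at each scale, either $\kappa r_k^d \le \theta F(u, B_{r_k})$, so that Lemma~\ref{lem:decay} applies and yields $M_{k+1} \le C_1 \tau M_k \le M_k/2$; or the deviation condition fails, which forces $F(u, B_{r_k}) < \kappa r_k^d/\theta$, and then monotonicity of $F$ in the radius gives $M_{k+1} \le \kappa r_k/(\theta \tau^{d-1})$. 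The smallness of $\rho_0$ keeps $M_k \le \varepsilon$ at every step; iterating $M_{k+1} \le (1/2) M_k + \kappa r_k/(\theta \tau^{d-1})$ then gives $M_k \to 0$, hence by monotonicity $F(u, B_r(x_0))/r^{d-1} \to 0$ as $r \to 0$. In particular both $\mathcal{H}^{d-1}(S_u \cap B_r(x_0))/r^{d-1} \to 0$ and, using $f(y, \xi) \ge |\xi|^{p(y)}$ together with $|\xi|^{p^-} \le 1 + |\xi|^{p(y)}$, $\int_{B_r(x_0)} |\nabla u|^{p^-}\,\mathrm{d}y/r^{d-1} \to 0$. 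Since minimizers of $\mathcal{F}$ are locally bounded (truncation with $g \in L^\infty$), Theorem~\ref{thm:thm7.8AFP} applied with $p=p^-$ forces $x_0 \notin S_u$: contradiction.

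To extend the bound to $\overline{S}_u$, for $x \in \overline{S}_u$ I would pick $x_n \in S_u$ with $x_n \to x$; for $n$ large one has $B_{\rho/2}(x_n) \subset B_\rho(x)$, and the previous step yields $F(u, B_\rho(x)) \ge F(u, B_{\rho/2}(x_n)) > \theta_0 (\rho/2)^{d-1}$, i.e.\ \eqref{eq:7.24AFP} after redefining the constant as $\theta_0/2^{d-1}$. For \eqref{eq:7.24AFPbis}, I would introduce
\[
\Sigma := \biggl\{ x \in \Omega : \limsup_{\rho \to 0} \frac{1}{\rho^{d-1}} \int_{B_\rho(x)} |\nabla u|^{p(y)}\,\mathrm{d}y > 0 \biggr\},
\]
which is $\mathcal{H}^{d-1}$-negligible since $|\nabla u|^{p(\cdot)} \in L^1_{\rm loc}(\Omega)$. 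For $x \in \overline{S}_u \setminus (\Sigma \cup S_u)$, the density lower bound \eqref{eq:7.24AFP} together with the vanishing bulk density at $x$ forces the upper Hausdorff $(d-1)$-density of $S_u$ at $x$ to be positive; since $\mathcal{H}^{d-1}(S_u) < \infty$, the classical density-zero property of $\mathcal{H}^{d-1}$-sets of finite measure at points outside the set implies that such $x$ form a $\mathcal{H}^{d-1}$-negligible subset of $\Omega \setminus S_u$. Hence $\mathcal{H}^{d-1}(\overline{S}_u \setminus S_u) \le \mathcal{H}^{d-1}(\Sigma) = 0$.

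The main obstacle will be the iteration above: Lemma~\ref{lem:decay} requires an implicit lower bound on $F$, via the deviation condition, which need not persist through the scales $r_k$. The dichotomy described above, combined with a careful tuning of $\rho_0$ in terms of $\varepsilon$, $\theta$, and $\tau$, is what preserves the inductive hypothesis $M_k \le \varepsilon$ and lets the iteration proceed.
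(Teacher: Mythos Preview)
Your iteration via the dichotomy on the deviation condition is essentially the paper's argument (the paper uses two scales $\sigma$ and $\tau$ where you use a single $\tau$, but your simplification works; you should also include $\rho_0\le \varepsilon(\tau)^2$ to meet the radius constraint in Lemma~\ref{lem:decay}). The $\Sigma$-set argument for \eqref{eq:7.24AFPbis} is also the same.

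There is, however, a genuine gap at the point where you invoke Theorem~\ref{thm:thm7.8AFP}. That criterion has \emph{two} hypotheses: the decay of $F(u,B_r(x_0))/r^{d-1}$ \emph{and} the uniform bound $\limsup_{\rho\to 0}\dashint_{B_\rho(x_0)}|u|^q<\infty$ for some $q>1$. You justify the latter by writing ``minimizers of $\mathcal{F}$ are locally bounded (truncation with $g\in L^\infty$)'', but the theorem you are proving is stated for an arbitrary quasi-minimizer of $F$, not for a minimizer of $\mathcal{F}$; no $L^\infty$ bound is available in general. Hence from $M_k\to 0$ you cannot conclude $x_0\notin S_u$ directly.

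The paper fixes this as follows: from the decay it only deduces that $x_0$ belongs to the exceptional set
\[
I:=\Bigl\{x\in\Omega:\ \limsup_{r\to 0}\dashint_{B_r(x)}|u|^{1^*}\,\mathrm{d}y=+\infty\Bigr\},
\]
which is $\mathcal{H}^{d-1}$-negligible. One therefore gets the lower bound \eqref{eq:7.24AFP} first on $S_u\setminus I$, then on $\overline{S_u\setminus I}$. The missing step is $\overline{S_u\setminus I}=\overline{S}_u$: if $x\notin\overline{S_u\setminus I}$, then in a neighbourhood $V$ one has $\mathcal{H}^{d-1}(S_u\cap V)=0$, so $u\in W^{1,p^-}(V)$; combining the Poincar\'e inequality with the energy upper bound \eqref{eq:7.23AFP} gives a uniform Campanato estimate, whence $u\in C^{0,\alpha}(V)$ and $x\notin\overline{S}_u$. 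Your approximation $x_n\in S_u\to x$ does not bypass this, since you only have the bound on $S_u\setminus I$, and $S_u\setminus I$ need not be dense in $S_u$ without the Campanato step.
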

\begin{proof}
Without loss of generality, we may assume that $x=0$. Let $0<\tau<1$ be fixed such that $\sqrt{\tau}\leq \frac{1}{C_1}$ and set $\varepsilon_0:=\varepsilon(\tau)$, where $C_1$ and $\varepsilon(\tau)$ are given from Lemma~\ref{lem:decay}. Let $0<\sigma<1$ be such that 
\begin{equation}
\sigma \leq \frac{\varepsilon_0}{C_1 (d\gamma_d+1)}\,,
\label{eq:choicesigma}
\end{equation}
and set
\begin{equation}
\rho_0:= \min\{1,\varepsilon(\sigma)^2,\varepsilon_0\tau^d\theta(\tau), \varepsilon_0\sigma^{d-1}\theta(\sigma)\}\,,
\label{eq:choicerho0}
\end{equation}
where $\theta(\tau)$ and $\theta(\sigma)$ are the constants of Lemma~\ref{lem:decay} corresponding to $\tau$ and $\sigma$, respectively.
First, we claim that if $\rho<\frac{\rho_0}{\kappa}$ and $B_\rho\subset\Omega$, then
\begin{equation}
F(u, B_\rho) \leq \varepsilon(\sigma)\rho^{d-1}
\label{eq:7.25AFP}
\end{equation}
implies
\begin{equation}
F(u, B_{\sigma\tau^m\rho}) \leq \varepsilon_0 \tau^\frac{m}{2}(\sigma\tau^m\rho)^{d-1}\,,\quad \forall m\in\N \,.
\label{eq:7.26AFP}
\end{equation}
We first prove \eqref{eq:7.26AFP} for $m=0$. With \eqref{eq:7.25AFP}, \eqref{eq:choicerho0} and if 
\begin{equation}
{\rm Dev}(u,B_{\rho})\leq \theta(\sigma)F(u,B_\rho)
\label{eq:7.27afp}
\end{equation} 
holds, in view of \eqref{decade}, \eqref{eq:7.23AFP} and the choice of $\sigma$ \eqref{eq:choicesigma} we deduce that
\begin{equation}
\begin{split}
F(u,B_{\sigma\rho}) \leq C_1\sigma^d F(u,B_\rho) & \leq  C_1\sigma^d (d\gamma_d\rho^{d-1}+\kappa'\rho^d) \\
 &\leq (\sigma\rho)^{d-1} C_1 \sigma (d\gamma_d + 1) \\
 & \leq \varepsilon_0 (\sigma\rho)^{d-1}\,,
\end{split}
\end{equation}
as desired. {If \eqref{eq:7.27afp} does not hold, then from the definition of $\rho_0$ and the quasi-minimality of $u$ we infer
\begin{equation*}
F(u, B_{\sigma\rho}) \leq F(u, B_{\rho}) \leq \frac{1}{\theta(\sigma)} {\rm Dev}(u,B_{\rho})  \leq \frac{\kappa\rho^d}{\theta(\sigma)} \leq \varepsilon_0 (\sigma\rho)^{d-1}\,,
\end{equation*}
proving again \eqref{eq:7.26AFP} for $m=0$. }

Now, we assume that \eqref{eq:7.26AFP} is valid for some $m\geq0$, and that 
\begin{equation}
{\rm Dev}(u,B_{\sigma\tau^m\rho})\leq \theta(\tau)F(u,B_{\sigma\tau^m\rho})\,.
\label{eq:7.28} 
\end{equation}
Then, by Lemma~\ref{lem:decay} and the choice of $\tau$ we get
\begin{equation*}
\begin{split}
F(u, B_{\sigma\tau^{m+1}\rho}) & \leq C_1\tau^d F(u, B_{\sigma\tau^m\rho}) \\
  & \leq C_1\tau^d\varepsilon_0 \tau^\frac{m}{2}(\sigma\tau^m\rho)^{d-1} \\
  & \leq \varepsilon_0 \tau^\frac{(m+1)}{2}(\sigma\tau^{m+1}\rho)^{d-1}\,,
\end{split}
\end{equation*}
which corresponds to \eqref{eq:7.26AFP} for $m+1$. If \eqref{eq:7.28} is not true, using \eqref{eq:devquasim}, 
we get
\begin{equation*}
\begin{split}
F(u, B_{\sigma\tau^{m+1}\rho}) \leq F(u, B_{\sigma\tau^{m}\rho}) \leq \frac{1}{\theta(\tau)} {\rm Dev}(u,B_{\sigma\tau^m\rho}) & \leq \frac{\kappa}{\theta(\tau)}(\sigma\tau^m\rho)^d \\
& \leq \varepsilon_0 \tau^{\frac{m+1}{2}}(\sigma\tau^{m+1}\rho)^{d-1}\,.
\end{split}
\end{equation*}
This proves the claim. \\
\\
Now, we assume \eqref{eq:7.25AFP} for some ball $B_\rho\subset\Omega$, with $\rho<\frac{\rho_0}{\kappa}$. From \eqref{eq:7.26AFP} we get
\begin{equation*}
\lim_{r\to0} \frac{F(u, B_r)}{r^{d-1}}=0\,,
\end{equation*}
whence, {by using the inequality $|\xi|^{p^-}\leq 1+|\xi|^{p(\cdot)}$, we infer
\begin{equation*}
\lim_{r\to0} \frac{1}{r^{d-1}} \int_{B_r(x)}|\nabla u|^{p^-}\,\mathrm{d}y=0\,.
\end{equation*}} Therefore, {Theorem~\ref{thm:thm7.8AFP} with $p=p^-$} and $q=1^*$ implies that $0\in I$, where
\[
I=\left\{x\in\Omega : \limsup_{r\to 0}\-int_{B_r(x)}|u(y)|^{1^*}\,\mathrm{d}y=+\infty\right\}.
\]
Then \eqref{eq:7.24AFP} holds true for all $x\in S_u\setminus I$. Exploiting the uniformity of the bound, by a density argument, the inequality is still true for balls centred in  $x\in \overline{S_u\setminus I}$. We are left to prove that $\overline{S_u\setminus I}=\overline{S}_u$.
Let $x\not\in\overline{S_u\setminus I}$; we first observe that the set $I$ is $\mathcal{H}^{d-1}$-negligible (see \cite[Lemma 3.75]{Ambrosio-Fusco-Pallara:2000}. Thus, we can find a neighborhood $V$ of $x$ such that $\mathcal{H}^{d-1}(V\cap S_u)=0$, and this implies in a standard way that $u\in W^{1,p^-}(V)$. Now, by virtue of the classical Poincar\'e inequality for Sobolev functions and the upper bound \eqref{eq:7.23AFP} we get
\begin{equation}
\dashint_{B_\rho(x)}  \left|\frac{u-(u)_{x,\rho}}{\rho^\alpha}\right|^{p^-}\,\mathrm{d}y \leq c \,,
\label{eq:campanatoexcess}
\end{equation}
where $(u)_{x,\rho}$ denotes the average of $u$ in $B_\rho(x)$ and $\alpha:=(p^--1)/p^-$.
With \eqref{eq:campanatoexcess} and the Campanato's characterization of H\"older continuity (see, e.g., \cite{Camp1}), we infer that a representative of $u$ belongs to $C^{0,\alpha}(V)$. Therefore, $x\not\in \overline{S}_u$, and this concludes the proof of \eqref{eq:7.24AFP}.

{As for \eqref{eq:7.24AFPbis}, it follows from \eqref{eq:7.24AFP} by a geometric measure theory argument. Let us define the set 
\begin{equation}
\Sigma:=\left\{x\in\Omega:\,\,\mathop{\lim\sup}_{r\to0} \frac{1}{r^{d-1}}\int_{B_r(x)}f(y,\nabla u)\,\mathrm{d}y>0\right\}\,.
\label{eq:setsigma}
\end{equation}
Since $|\nabla u|^{p(\cdot)}\in L^1_{\rm{loc}}(\Omega)$, it holds that $\mathcal{H}^{d-1}(\Sigma)=0$  (see, e.g., \cite[Section~2.4.3, Theorem~3]{EG}). 
On the other hand, if $x\in \Omega\cap(\overline{S}_u\setminus\Sigma)$, from \eqref{eq:7.24AFP} we derive that the density 
$$
\Theta_{d-1}(S_u,x):=\limsup_{r\to 0}\frac{\mathcal{H}^{d-1}(S_u\cap B_r(x))}{d\gamma_dr^{d-1}}\ge\theta_0>0\,.
$$
Let us define, for any Borel set $E\subset \mathbb{R}^d$ , the Radon measure $\mu(E) = \mathcal{H}^{d-1}(E\cap S_u)$. 
Since from $\Theta_{d-1}(E,x)\ge \theta_0$ for every $x\in B$, $B$ a Borel set, we infer  that $\mu\ge \theta_0\,\mathcal{H}^{d-1}\ristretto B$ (see, e.g., \cite[Theorem~2.56]{Ambrosio-Fusco-Pallara:2000}), choosing $B=\Omega\cap(\overline{S}_u\setminus S_u)\setminus\Sigma$, we deduce that  $\mu(\Omega\cap(\overline{S}_u\setminus S_u)\setminus\Sigma)\ge\theta_0\,\mathcal{H}^{d-1}(\Omega\cap(\overline{S}_u\setminus S_u)\setminus\Sigma)$. Thus we get $\mathcal{H}^{d-1}(\Omega\cap(\overline{S}_u\setminus S_u)\setminus\Sigma)=0$, which in turn implies \eqref{eq:7.24AFPbis}, since $\mathcal{H}^{d-1}(\Sigma)=0$.}

\end{proof}

We are now in position to prove the following existence result for minimizers of $\mathcal{F}$ defined in \eqref{eq:functionals}. Since the integrand $f$ does not depend on $u$, it is immediate to check that $\mathcal{F}(u)$ is non increasing by truncations. Therefore, in order to minimize $\mathcal{F}$, we may restrict to those $u\in SBV(\Omega)$ such that $\|u\|_\infty\leq M$, where $M:=\|g\|_\infty$. 

\begin{theorem}\label{thm:main}
Let $f$ comply with the assumptions of Theorem~\ref{thm:mainthm}. Then there exists a minimizer $u\in SBV(\Omega)\cap L^\infty(\Omega)$ of $\mathcal{F}$ defined in \eqref{eq:functionals}. Moreover, the pair $(\overline{S}_u,u)$ is a minimizer of the functional $\mathcal{G}$ \eqref{eq:functionalsG}.
\end{theorem}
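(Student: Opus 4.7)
The plan is to split the assertion into two parts: first, existence of a minimizer of $\mathcal{F}$ by the direct method in $SBV^{p(\cdot)}$; second, identification of the pair $(\overline{S}_u,u)$ as a minimizer of the strong formulation $\mathcal{G}$, which reduces to showing that $\mathcal{H}^{d-1}(\overline{S}_u\setminus S_u)=0$ through the density lower bound already proved in Theorem~\ref{thm:mainthm}.

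For existence, I would proceed classically. Since $\mathcal{F}(u)$ does not increase under the truncation $u\mapsto (u\wedge M)\vee(-M)$ with $M=\|g\|_\infty$ (the bulk term decreases by convexity and $p(\cdot)$-growth, the fidelity term decreases because $g$ takes values in $[-M,M]$, and $S_u$ can only shrink), we may restrict to a minimizing sequence $(u_n)$ with $\|u_n\|_\infty\le M$. The energy bound then gives
\[
\sup_n \Bigl(\varrho_{p(\cdot)}(\nabla u_n)+\mathcal{H}^{d-1}(S_{u_n})\Bigr)<+\infty,
\]
so by Ambrosio's compactness theorem in $SBV$ (applied to the bounded sequence with equi-integrable gradients, since $p^->1$) one extracts a subsequence converging a.e.\ and in $L^q(\Omega)$ to some $u\in SBV(\Omega)\cap L^\infty(\Omega)$. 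Lower semicontinuity of the bulk part under log-H\"older continuity of $p(\cdot)$ follows from the results of \cite{DCLV} used through Step~1 of Theorem~\ref{gammaconv} (taking the trivial sequence $p_n\equiv p(\cdot)$), and the jump term is lower semicontinuous in $SBV$ by Ambrosio's theorem. Continuity of the $L^q$ fidelity term under a.e.\ convergence and the $L^\infty$ bound completes the argument, so $u$ minimizes $\mathcal{F}$.

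For the second part, I observe that any minimizer of $\mathcal{F}$ belongs to $\mathcal{M}_\kappa(\Omega)$ with $\kappa=2^q\alpha\gamma_d\|g\|_\infty^q$, as recorded after the definition of quasi-minimizer. Thus the hypotheses of Theorem~\ref{thm:mainthm} apply to $u$, giving
\[
\mathcal{H}^{d-1}\bigl((\overline{S}_u\setminus S_u)\cap\Omega\bigr)=0,
\]
which in particular yields $\mathcal{H}^{d-1}(\overline{S}_u\cap\Omega)=\mathcal{H}^{d-1}(S_u\cap\Omega)$. Since $\overline{S}_u$ is closed and $\mathcal{L}^d$-negligible (being $\mathcal{H}^{d-1}$-finite), the bulk integrals over $\Omega$ and $\Omega\setminus\overline{S}_u$ coincide, and $u\in W^{1,p(\cdot)}(\Omega\setminus\overline{S}_u)$ by the standard fact that an $SBV$ function with locally $\mathcal{H}^{d-1}$-negligible jump set on an open set is Sobolev there. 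Therefore
\[
\mathcal{G}(\overline{S}_u,u)=\mathcal{F}(u).
\]
Finally, to conclude minimality for $\mathcal{G}$, take any admissible competitor $(K,v)$ with $K\subset\R^d$ closed, $v\in W^{1,p(\cdot)}(\Omega\setminus K)$ and $\mathcal{G}(K,v)<+\infty$. Then $v$ extended arbitrarily on $K$ lies in $SBV^{p(\cdot)}(\Omega)$ (truncating by $M$ if needed, using again that $\mathcal{G}$ does not increase) with $S_v\subset K$ up to an $\mathcal{H}^{d-1}$-null set, so $\mathcal{F}(v)\le \mathcal{G}(K,v)$. Combining with the minimality of $u$ for $\mathcal{F}$ and the identity above yields
\[
\mathcal{G}(\overline{S}_u,u)=\mathcal{F}(u)\le\mathcal{F}(v)\le\mathcal{G}(K,v),
\]
which is the desired minimality.

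The main obstacle is the second part rather than the first: proving that $\mathcal{H}^{d-1}(\overline{S}_u\setminus S_u)=0$ in the variable exponent setting is precisely the content of Theorem~\ref{thm:mainthm}, which rested on the delicate decay estimate of Lemma~\ref{lem:decay}. Once this is granted, the remaining identification of $\mathcal{G}$ and $\mathcal{F}$ on minimizers and the comparison with arbitrary competitors are routine.
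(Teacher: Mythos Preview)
Your proposal is correct and follows essentially the same route as the paper: existence by the direct method in $SBV$ with truncation to $\|u\|_\infty\le\|g\|_\infty$, then the identification $\mathcal{G}(\overline{S}_u,u)=\mathcal{F}(u)$ via Theorem~\ref{thm:mainthm} and the standard comparison with an arbitrary competitor $(K,v)$ through $\mathcal{F}(v)\le\mathcal{G}(K,v)$. The only cosmetic difference is that, since $f(x,\cdot)$ is convex, the paper invokes De Giorgi's lower semicontinuity theorem directly rather than the quasiconvex result of \cite{DCLV} or Step~1 of Theorem~\ref{gammaconv}.
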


\begin{proof}
As $f(x, \cdot)$ is convex for every $x\in \Omega$, and $p(\cdot)$ is superlinear, the existence of a minimizer $u$ as above can be established in a standard way by using the lower semicontinuity result of \cite{DeG} combined with the classical closure and compactness results in $SBV$ (see \cite[Theorem~4.7 and 4.8]{Ambrosio-Fusco-Pallara:2000}). Using the minimality property of $u$, a comparison with the constant function $v=0$ gives $u\in W^{1,p(\cdot)}(\Omega\setminus\overline{S}_u)$. Moreover, \eqref{eq:7.24AFPbis} holds. Now, let $(K,v)$ be any competitor with $\mathcal{G}(K,v)<+\infty$. Since the functionals are non increasing by truncations, it is not restrictive to assume $v\in L^\infty(\Omega)$, whence (see \cite[Theorem~4.4]{Ambrosio-Fusco-Pallara:2000}) $v\in SBV(\Omega)$ and $\mathcal{H}^{d-1}(S_v\setminus K)=0$. Using the minimality of $u$ again, we conclude that $\mathcal{G}(\overline{S}_u,u)=\mathcal{F}(u)\leq \mathcal{F}(v)\le \mathcal{G}(K,v)$.
\end{proof}

\section*{Acknowledgements} 

The authors are members of Gruppo Nazionale per l'Analisi Matematica, la Probabilit\`a e le loro Applicazioni (GNAMPA) of INdAM.
G. Scilla and F. Solombrino have been supported by the project STAR PLUS 2020 – Linea 1 (21‐UNINA‐EPIG‐172) ``New perspectives in the Variational modeling of Continuum Mechanics''. The work of F. Solombrino is part of the project “Variational methods for stationary and evolution problems with singularities and interfaces” PRIN 2017 financed by the Italian Ministry of Education, University, and Research.

\bibliographystyle{siam}

\begin{thebibliography}{10}

\bibitem{ABF}
E. Acerbi, G.~Bouchitt\'{e}, and I.~Fonseca,  {Relaxation of convex functionals: The gap phenomenon}.  \emph{Ann. Inst. Henri Poincare (C) Anal. Non Lineaire} {\bf 20} (2003), 359--390.


\bibitem{AM} E. Acerbi and G. Mingione, {Regularity results for a class of quasiconvex functionals with nonstandard growth}.
\emph{Ann. Scuola Norm. Sup. Pisa Cl. Sci.} (4) {\bf 30 } (2001), no. 2, 311--339. 

\bibitem{AM2} E. Acerbi and G. Mingione, {Regularity results for a class of functionals with nonstandard growth}.
\emph{Arch. Rational Mech. Anal.} {\bf 156 } (2001), 121--140. 

\bibitem{ars} S. Almi, D. Reggiani and F. Solombrino, Lower semicontinuity and relaxation for free discontinuity functionals with non-standard growth, arXiv:2301.07406 (2023).

\bibitem{A89} L. Ambrosio, A compactness theorem for a new class of functions of bounded variation. \emph{Boll. Un. Mat. Ital. B} (7) {\bf 3} (1989), no. 4, 857--881.

\bibitem{Amb} L. Ambrosio, Existence Theory for a New Class of Variational Problems. \emph{Arch. Rat. Mech.} 111 (1990), 291--322.



\bibitem{Ambrosio-Fusco-Pallara:2000}
{ L.~Ambrosio, N.~Fusco, and D.~Pallara}, {\em Functions of bounded
  variation and free discontinuity problems}. Oxford Mathematical Monographs,
  The Clarendon Press, Oxford University Press, New York, 2000.











%
%
%
%

\bibitem{Camp1} S. Campanato, Propriet\`a di h\"olderianit\`a di alcune classi di funzioni. \emph{Ann. Scuola Norm. 
Sup. Pisa CL Sci.} (3), {\bf 17} (1963), 175--188. 


\bibitem{CL} M. Carriero and A. Leaci, $S^k$-valued maps minimizing the $L^p$ norm of the gradient with free discontinuities. \emph{Annali della Scuola Normale Superiore di Pisa - Classe di Scienze} {\bf 18}(3) (1991), 321--352.





\bibitem {CLR} Y. Chen, S. Levine and M. Rao, Variable exponent, linear growth functionals in image restoration,
\emph{SIAM J. Appl. Math.} {\bf 66}(4) (2006), 1383--1406. 


\bibitem{CosM} A. Coscia and G. Mingione, H\"older continuity of the gradient of $p(x)$-harmonic mappings,  \emph{C. R. Acad. Sci. Paris Sér. I Math.} {\bf 328} (1999), no. 4, 363--368.





\bibitem{DCLV} V. De Cicco, C. Leone and A. Verde, Lower semicontinuity in $SBV$ for integrals with variable growth. \emph{SIAM J. Math. Anal.}, {\bf 42}(6) (2010), 3112--3128.

\bibitem{DeG} E. De Giorgi, Teoremi di semicontinuit\`a nel calcolo delle variazioni, Lezioni tenute all'Istituto Nazionale di Alta Matematica, a.a. 1968-69 Roma, 1969.

\bibitem{DeGCarLea} E. De Giorgi, M. Carriero and A. Leaci, Existence theorem for a minimum problem with free discontinuity set. \emph{Arch. Rational Mech. Anal.}, {\bf 108} (1989), 195--218.

\bibitem{Dieni} L. Diening, Maximal function on generalized Lebesgue spaces $L^{p(\cdot)}$, \emph{Math. Inequal. Appl.} {\bf 7}(2) (2004), 245--253.


      

\bibitem{DHHR} L. Diening, P. Harjulehto, P. H\"ast\"o and M. Ruzicka, \emph{Lebesgue and Sobolev Spaces with Variable Exponents}. Lecture notes in Mathematics, Springer (2010).

\bibitem{DMS} L. Diening, J. Malek, and M. Steinhauer, On Lipschitz truncations of Sobolev functions (with variable exponent) and their selected applications, \emph{ESAIM Control Optim. Calc. Var.}, {\bf 14} (2008),  211--232.

\bibitem{MR3209686} L. Diening and  S. Schwarzacher, Global gradient estimates for the {$p(\cdot)$}-{L}aplacian. \emph{Nonlinear Anal.}, {\bf 106} (2014), 70--85

\bibitem{ELEUTERI2023103815} M. Eleuteri and A. Passarelli di Napoli, Lipschitz regularity of minimizers of variational integrals with variable exponents. 
\emph{Nonlinear Analysis: Real World Applications}, {\bf 71} (2023), 103815.
     
\bibitem{EG} L.C. Evans and R.F. Gariepy, \emph{Measure Theory and Fine Properties of Functions}. Studies in Advanced
Math., CRC Press, Boca Raton, 1992.



\bibitem{FFM} I. Fonseca, N. Fusco, P. Marcellini, An existence result for a nonconvex variational problem via regularity. \emph{ESAIM: Control Optim. Calc. Var.}, {\bf 7} (2002) 69--95



\bibitem{FrancfortMarigo:1998} G.A. Francfort and J.J. Marigo, Revisiting brittle fracture as an energy minimization problem, \emph{J. Mech. Phys. Solids} {\bf 46} (1998), no. 8, 1319--1342.

\bibitem{Fr} M. Friedrich, A compactness result in $GSBV^p$ and applications to $\Gamma$-convergence for free discontinuity problems. \emph{Calc. Var.} 58:86 (2019).



\bibitem{FMT} N. Fusco, G. Mingione, C. Trombetti, Regularity of Minimizers for a Class of Anisotropic Free Discontinuity Problems. \emph{J. Convex Anal.} {\bf 8} (2001), 349--367.

\bibitem{GIUSTI} E.~ Giusti, \emph{{Direct Methods in the Calculus of Variations}}, World Scientific Publishing Co., Inc., River Edge (2003). 

\bibitem{Griffith:1921} {A.A. Griffith}, {The phenomena of rupture and flow in solids}. \emph{Philos.\ Trans.\ R.\ Soc.\ London} {\bf 221} (1921), 163--198. 


\bibitem{Hasto} P. Harjulehto, P. H\"ast\"o, V. Latvala {Minimizers of the variable exponent, non-uniformly convex Dirichlet energy}. \emph{Journal de Math. Pures et Appl.} {\bf 89}(2) (2008), 174--197.

\bibitem{Hasto2} P. Harjulehto, P. H\"ast\"o, V. Latvala and O. Toivanen, {Critical variable exponent functionals in image restoration}. \emph{Appl. Math. Lett.} {\bf 26}(1) (2013), 56--60.

\bibitem{HastoOk} P. H\"{a}st\"{o} and J. Ok, Maximal regularity for local minimizers of non-autonomous
              functionals. \emph{J. Eur. Math. Soc. (JEMS)} {\bf 24} (2022), 1285--1334.



\bibitem{KR} O. Kov\'acik and J. R\'akosník, { On spaces $L^{p(x)}$  and $W^{1,p(x)}$}. \emph{Czechoslovak Math. J.},
{\bf 41}(116) (1991), 592--618.

\bibitem{Li} F. Li, Z. Li and L. Pi, Variable exponent functionals in image restoration. \emph{Applied Mathematics and Computation}, {\bf 216} (2010), 870--882.



\bibitem{MS} D. Mumford and J. Shah, Optimal approximations by piecewise smooth functions and associated
variational problems. \emph{Comm. Pure Appl. Math.} {\bf 42} (1989), 577--685.



\bibitem{SSS} G. Scilla, F. Solombrino and B. Stroffolini, \emph{Integral representation and $\Gamma$-convergence for free-discontinuity problems with $p(\cdot)$-growth}, arXiv:2204.09530 (2022).


\bibitem{SH} I. I. Sharapudinov, Approximation of functions in the metric of the space $L^{p(t)}([a, b])$
and quadrature formulas, (Russian). In \emph{Constructive function theory ’81} (Varna,
1981), pages 189–193. Publ. House Bulgar. Acad. Sci., Sofia, 1983.

\bibitem{zikov} V.V. Zhikov, Problems of convergence, duality, and averaging for a class of functionals of the calculus of variations. \emph{Dokl. Akad. Nauk SSSR}, {\bf 267} (1982), 524--528.

\bibitem{zikov3} V.V. Zhikov, Averaging of functionals of the calculus of variations and elasticity theory. \emph{Izv. Akad. Nauk SSSR Ser. Mat.}, {\bf 50} (1986) 675--710.


\bibitem{Z2} V.V. Zhikov, {On some variational problems}. \emph{Russ. J. Math. Phys.} {\bf 5} (1997), 105--116.



\end{thebibliography}

\end{document}